\documentclass[12pt]{article}
\usepackage[a4paper,margin=1.05in]{geometry}
\usepackage{amsmath,amssymb,amsthm,mathtools,dsfont}
\usepackage{hyperref}
\hypersetup{hidelinks}
\usepackage{enumitem}
\usepackage{microtype}
\usepackage{subcaption}
\usepackage{tikz}
\usetikzlibrary{decorations.pathreplacing}

\def\R{\mathbb{R}}
\def\N{\mathbb{N}}
\newcommand{\ds}{\displaystyle}

\setlist[itemize]{leftmargin=1.5em}
\setlist[enumerate]{leftmargin=1.5em}
\newtheorem{theorem}{Theorem}[section]
\newtheorem{proposition}[theorem]{Proposition}
\newtheorem{lemma}[theorem]{Lemma}
\newtheorem{corollary}[theorem]{Corollary}
\theoremstyle{definition}

\definecolor{aquamarine}{rgb}{0.13, 0.68, 0.8} 

\numberwithin{equation}{section}
\newcommand{\be}{\begin{equation}}
\newcommand{\ee}{\end{equation}}
\newcommand{\baa}{\begin{array}}
\newcommand{\eaa}{\end{array}}
\newcommand{\ba}{\begin{eqnarray}}
\newcommand{\ea}{\end{eqnarray}}

\begin{document}

\title{\bf{Thresholds, fragmentation and symmetrization in parabolic equations}}
\author{Matthieu Alfaro$^{\hbox{\small{ a}}}$, Yihong Du$^{\hbox{\small{ b}}}$, Fran\c cois Hamel$^{\hbox{\small{ c}}}$, and Lionel Roques$^{\hbox{\small{ d }}}$\\
\\
\footnotesize{$^{\hbox{a }}$Univ Rouen Normandie, LMRS, CNRS, Rouen, France}\\
\footnotesize{$^{\hbox{b }}$ School of Science and Technology, University of New England, Armidale, NSW 2351, Australia}\\
\footnotesize{$^{\hbox{c }}$Aix Marseille Univ, CNRS, I2M, Marseille, France} \\
\footnotesize{$^{\hbox{d }}$INRAE, BioSP, 84914, Avignon, France}}
\date{}

\maketitle

\begin{abstract}
\noindent{This paper is mainly concerned with the large-time dynamics of bounded nonnegative solutions of reaction-diffusion equations on the real line with  nonlinearities mainly of the bistable-type.  We first consider initial data of the type $\alpha\,\mathds{1}_I$, namely scalar multiples of indicator functions of intervals $I$. For each amplitude $\alpha$, the existence of a threshold length $L^*(\alpha)$ separating the extinction and the persistence of the solutions at large time is known. We here address the question as to whether the limit of the threshold sizes $L^*(\alpha)$ as $\alpha\to+\infty$ is positive or zero. We provide sufficient conditions under which this limit is positive, and others under which it is zero. Secondly, when the limit is positive, we show that some fragmented initial data, which are equally distributed as $\alpha\,\mathds{1}_I$, give rise to solutions persisting at large time, whereas the solutions emanating from $\alpha\,\mathds{1}_I$ go to extinction at large time. This result shows an unexpected favourable effect of the fragmentation of the initial datum on the large-time dynamics. Last, we consider the mass concentration principle for parabolic equations, which states that nonnegative solutions can be controlled from above in an integral sense by the solutions emanating from the Schwarz symmetrically decreasing rearrangements of the initial data. Since the pioneering results of [Bandle, 1976] and [Alvino, Trombetti, Lions, 1990], this principle is known to hold in bounded domains with Dirichlet boundary conditions under different types of assumptions of the coefficients of the equation. We show that this mass concentration principle is not valid in general, even for simple equations of the type $\partial_t u = \partial_{xx} u + f(u)$.}
\vskip 4pt
\noindent{\small{\it{Keywords}}: Reaction-diffusion equations; Cauchy problem; large-time dynamics; threshold; fragmentation; symmetrization.}
\vskip 4pt
\noindent{\small{\it{Mathematics Subject Classification}}: 35B40, 35B51, 35K15, 35K57.}
\end{abstract}

\tableofcontents


\section{Introduction and main results}\label{sec1}

This paper is concerned with some questions related to the large-time behaviour of solutions of the one-dimensional reaction-diffusion equation
\begin{equation}\label{eq:PDE}
  \begin{cases}
    \partial_t u = \partial_{xx} u + f(u), & t>0,\ x\in\R,\\[2pt]
    u(0,x) = u_0(x), & x\in\R,
  \end{cases}
\end{equation}
where the initial datum $u_0\in L^\infty(\R)$ is nonnegative. Throughout the paper, we assume that the nonlinearity $f:\R_+\to\R$ is locally Lipschitz continuous and differentiable at $0$. Furthermore, in most results, $f$ is bistable with positive mass over $[0,1]$, namely there exists $\theta\in(0,1)$ such that
\begin{equation}\label{hyp:f1a}
\left\{
\baa{ll}
f(0)=f(\theta)=f(1)=0, & f'(0)<0,\vspace{3pt}\\
f(s)<0\ \text{ for } s\in(0,\theta), & f(s)>0\ \text{ for } s\in(\theta,1),\vspace{3pt}\\
\ds\int_0^1 f(s)\,ds>0,&
\eaa
\right.
\end{equation}
and
\begin{equation}\label{hyp:f1b}
f(s)<0\ \text{ for } s>1.
\end{equation}
The solution $u=u(t,x)$ of~\eqref{eq:PDE} is then the unique bounded nonnegative function defined in~${(0,+\infty)\times\R}$ such that $u(t,\cdot)\to u_0$ as $t\to0^+$ in $L^1_{loc}(\R)$. Furthermore, from standard parabolic regularity and the maximum principle, $u$ is a classical solution in~$(0,+\infty)\times\R$, $u(t,x)>0$ for all $t>0$ and $x\in\R$ provided that $\|u_0\|_{L^\infty(\R)}>0$, and $\|u(t,\cdot)\|_{L^\infty(\R)}\le\max(\|u_0\|_{L^\infty(\R)},1)$ for all $t>0$. 

In this paper, we address the following question: how does the fate of the solution~$u$ depend on its initial datum $u_0$? More precisely, we are interested in the threshold between the convergence of $u(t,\cdot)$ to $0$ as~$t\to+\infty$ and the convergence to a positive steady state, in terms of the amplitude of $u_0$ and of the size and fragmentation of its support.

We first consider compactly supported initial data of the form
\begin{equation}\label{eq:IC-alpha-L}
u_0(x)=u_\alpha^L(0,x):=\alpha\,\mathds{1}_{(-L/2,L/2)}(x)=\left\{\baa{ll}\!\alpha & \hbox{if $x\in(-L/2,L/2)$},\vspace{3pt}\\ \!0 & \hbox{if $x\in\R\setminus(-L/2,L/2)$},\eaa\right.
\ee
for $\alpha>0$ and $L>0$, and we write $u_\alpha^L$ the corresponding solution of \eqref{eq:PDE}. It follows from~\eqref{hyp:f1a} and the maximum principle that, if $\alpha\in(0,\theta]$, then $u^L_\alpha(t,\cdot)\to0$ as $t\to+\infty$ uniformly in $\R$. The sharp-threshold result of Zlato\v{s}~\cite{Zla06} implies that, for every $\alpha\in(\theta,1]$, there exists a critical length $L^*(\alpha)\in(0,+\infty)$ separating extinction and propagation.

\begin{theorem}[Zlato\v{s} \cite{Zla06}]\label{thm:Zla}
Assume that $u_0$ is given by \eqref{eq:IC-alpha-L} and that $f$ satisfies~\eqref{hyp:f1a}. For every $\alpha\in(\theta,1]$, there exists $L^*(\alpha)\in(0,+\infty)$ such that
\be\label{trichotomy}
\!\lim_{t\to+\infty}\!u_\alpha^L(t,\cdot)\!=\!\left\{
\begin{array}{llll}
\!\!0 & \!\!\text{uniformly in } \R & \!\!\text{if } 0\!<\!L\!<\!L^*(\alpha) & \!\!\!(\!\hbox{``extinction''}),\\[1mm]
\!\!p & \!\!\text{uniformly in } \R & \!\!\text{if } L=L^*(\alpha) & \!\!\!(\!\hbox{``cv. to ground state''}),\\[1mm]
\!\!1 & \!\!\text{locally uniformly in } \R & \!\!\text{if } L\!>\!L^*(\alpha) & \!\!\!(\!\hbox{``propagation''}),
\end{array}\right.
\ee
where $p$ is the positive symmetrically decreasing solution of
\begin{equation}\label{eq:U}
p''+f(p)=0\hbox{ in $\R$},\ \ p(0)=\theta^*,\ \ p'(0)=0,\ \ p'(x)<0\hbox{ for $x>0$},\ \ p(\pm\infty)=0,
\end{equation}
with $\theta^*\in(\theta,1)$ determined by
\[
\int_0^{\theta^*}\!\!f(s)\,ds=0.
\]
\end{theorem}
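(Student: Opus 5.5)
Fix $\alpha\in(\theta,1]$ and regard $L\mapsto u_\alpha^L(0,\cdot)$ as a one-parameter family of initial data that is monotone increasing in $L$ with respect to the inclusion of supports. The plan is to use the general trichotomy for compactly supported nonnegative data (Zlato\v{s}, building on Du--Matano): for every bounded, nonnegative, compactly supported $u_0$, the solution of \eqref{eq:PDE} converges either to $0$ uniformly, or to a translate $p(\cdot-x_0)$ of the ground state uniformly, or to $1$ locally uniformly. The standing assumptions \eqref{hyp:f1a} and \eqref{hyp:f1b} (bistable, positive mass) are exactly those under which positive steady states with values in $(0,1]$ are $1$, the ground state $p$ and its translates, and $\theta^*$ is defined by $\int_0^{\theta^*}f=0$. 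Periodic states can be ruled out as $\omega$-limits because they would have to oscillate below $\theta^*$, which is incompatible with the zero-number argument applied to compactly supported data. Since $u_0$ is even, the solution stays even, so the only admissible translate is $x_0=0$.

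Next I will define the extinction set $E=\{L>0:\ u_\alpha^L\to 0\}$ and the propagation set $P=\{L>0:\ u_\alpha^L\to1\}$ and show the following.
\begin{enumerate}
\item[(i)] Both sets are nonempty. For small $L$, the $L^1$ mass $\alpha L$ is small, so the heat semigroup bound $\|e^{t\Delta}u_0\|_\infty\le \alpha L/\sqrt{4\pi t}$ drives $u$ below $\theta$ at some finite time $t_0$. Comparison with the ODE then gives extinction, using $f(s)\le Ks$ on bounded sets to control $u$ up to $t_0$. For large $L$, $u_0$ dominates a compactly supported subsolution built from the solution of $\phi''+f(\phi)=0$ on a large interval with Dirichlet data and maximum in $(\theta^*,\alpha)$, which exists because $\alpha>\theta\ldots$. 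More precisely, one uses a level $\beta\in(\theta^*,\alpha)$ when $\alpha>\theta^*$. When $\alpha\in(\theta,\theta^*]$, the argument is instead to wait: $u(t,0)$ increases locally like the ODE $\dot s=f(s)$ from $\alpha>\theta$ on a long interval, reaching values above $\theta^*$ on large sets, and then the subsolution applies.
\item[(ii)] Both sets are open, by continuous dependence on $L$ in $L^1$: uniform convergence to $0$ is witnessed at a finite time by $u<\theta$ strictly everywhere, together with uniform decay at infinity, and being above the subsolution is likewise witnessed at a finite time.
\item[(iii)] Monotonicity: $L<L'$ implies $u_\alpha^L\le u_\alpha^{L'}$, so $E$ and $P$ are intervals $(0,a)$ and $(b,\infty)$.
\end{enumerate}

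The main obstacle is sharpness, i.e.\ showing that $a=b$. Suppose $L_1<L_2$ both yield convergence to $p$. Then $u_1<u_2$ for $t>0$, by the strong maximum principle. I will take $w=u_2-u_1>0$; its tail is controlled because $p$ has exponential decay with rate $\sqrt{-f'(0)}$. The goal is a contradiction with the instability of $p$ via its principal eigenfunction: $p$ is linearly unstable, as the linearized operator $\partial_{xx}+f'(p)$ has a positive principal eigenvalue $\lambda$, with $p'$ a sign-changing eigenfunction of eigenvalue $0$. Following Zlato\v{s}, I would use that $u_2(t_0,\cdot)\ge u_1(t_0,\cdot+\delta)$-type shifted comparisons fail, and instead compare $u_2$ with $p+\varepsilon\psi e^{\lambda t}$ near the limit. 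Because $u_1\to p$ and $u_2\ge u_1+c\,\psi$ at a late time (a Harnack-type lower bound on $w$ relative to the positive eigenfunction $\psi$, uniform on the exponential tails thanks to the matching decay), $u_2$ eventually exceeds a supersolution perturbation $p+\eta\psi$ that is pushed away from $p$. This contradicts $u_2\to p$. Hence $E\cup P$ misses exactly one point $L^*(\alpha)=a=b$, at which the trichotomy forces convergence to $p$.
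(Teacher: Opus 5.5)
The paper gives no proof of this statement. It is quoted from Zlato\v{s}, and it is also contained in Theorem~\ref{thm:DM} applied to the nested family $L\mapsto\alpha\,\mathds{1}_{(-L/2,L/2)}$, with evenness forcing $x_0=0$.

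Your reduction is sound up to the last step. This covers extinction for small $L$ via the heat-kernel $L^1$--$L^\infty$ bound, and propagation for large $L$, including the waiting argument when $\alpha\le\theta^*$. It also covers openness of $E$ and $P$ by continuous dependence, monotonicity, and identification of the limit as $p$ for $L\in[a,b]$ via the cited trichotomy and evenness. Your list of steady states omits the constant $\theta$, but the cited trichotomy excludes it.

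The genuine gap is the sharpness step $a=b$, which is the real content beyond the cited convergence result. Three things go wrong:
\begin{enumerate}
\item The bound $u_2(t_0,\cdot)\ge u_1(t_0,\cdot)+c\,\psi$ on $\R$ cannot hold at any finite $t_0$. Since the data are compactly supported, $0<w(t_0,x)\le e^{Kt_0}\,(G(t_0,\cdot)\ast u_\alpha^{L_2}(0,\cdot))(x)$, with $K$ a Lipschitz constant of $f$ on $[0,1]$, so $w(t_0,\cdot)$ has Gaussian decay. Meanwhile $\psi$ decays only exponentially, so there is no ``matching decay''.
\item For the same reason, $u_2(t_0,x)<p(x)$ for $|x|$ large. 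So no function lying above $p$ on all of $\R$ can be placed below $u_2(t_0,\cdot)$. This includes $p+\eta\psi$, which in any case would have to be a \emph{sub}solution, not a supersolution.
\item Even locally, $u_2\ge u_1+c\psi$ does not give $u_2\ge p+\eta\psi$. Here $c=c(t_0)\to0$ because $w\to0$, and nothing controls $p-u_1(t_0,\cdot)$ relative to $c(t_0)$.
\end{enumerate}
An instability argument can be repaired. Write $w_t=w_{xx}+c(t,x)w$ with $c\to f'(p)$; a positive Dirichlet principal eigenvalue of $\partial_{xx}+f'(p)$ on a large bounded interval then forces exponential growth of $w$ there, contradicting $w\to0$. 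However, this needs $f\in C^1$ near $[0,\theta^*]$, which is not assumed here.

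The missing idea is exactly the one you dismissed: for this family, shifted comparisons do hold. Let $L_1<L_2$ and $\delta:=(L_2-L_1)/2$. Then $\alpha\,\mathds{1}_{(-L_1/2,L_1/2)}(\cdot-\delta)=\alpha\,\mathds{1}_{(L_2/2-L_1,\,L_2/2)}\le\alpha\,\mathds{1}_{(-L_2/2,L_2/2)}$. By translation invariance and comparison, $u_\alpha^{L_2}(t,x)\ge u_\alpha^{L_1}(t,x-\delta)$ for all $t>0$ and $x\in\R$. If both solutions converged to $p$, letting $t\to+\infty$ at $x=\delta$ would give $p(\delta)\ge p(0)=\theta^*$, contradicting $p'<0$ on $(0,+\infty)$. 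Hence at most one $L$ yields convergence to $p$, so $a=b=:L^*(\alpha)$. This uses only the Lipschitz continuity of $f$ and no spectral information.
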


The ground state $p$ therefore acts as a transition between extinction and propagation regimes. For general results on the existence of such ground state solutions $p$ for elliptic equations in any spatial dimension, we refer e.g. to the seminal paper~\cite{BerLio83}.

Theorem~\ref{thm:Zla} has been extended in \cite{DuMat10} to general increasing families of compactly supported initial data.  Let
\[
X_+ := \bigl\{ \phi \in L^\infty(\R) : \phi \ge 0,\ \phi\text{ is compactly supported}\bigr\}.
\]
Consider a family $\{\phi_\lambda\}_{\lambda\ge 0} \subset X_+$ satisfying
\begin{equation} \label{hyp:DuMat}
\left\{
\begin{aligned}
   &\phi_\lambda \in X_+ \text{ for all } \lambda\ge0,\ \text{and } 
     \lambda \mapsto \phi_\lambda \text{ is continuous from } \R_+ \text{ to } L^1(\R), \\
    &0 \le \lambda_1 < \lambda_2 \ \Longrightarrow\
     \phi_{\lambda_1} \le \phi_{\lambda_2}\hbox{ in $\R$}\ \hbox{ and }\ 
     \|\phi_{\lambda_2}-\phi_{\lambda_1}\|_{L^\infty(\R)}>0,\\
    & \phi_0=  0 \text{ a.e. in }\R.
\end{aligned}
\right.
\end{equation}

\begin{theorem}[Du, Matano \cite{DuMat10}]\label{thm:DM}
Assume that $f$ satisfies \eqref{hyp:f1a}-\eqref{hyp:f1b}. Let $\{\phi_\lambda\}_{\lambda\ge0}\subset X_+$ satisfy~\eqref{hyp:DuMat} and, for each $\lambda\ge0$, let $u_\lambda$ denote the solution of \eqref{eq:PDE} with initial datum $\phi_\lambda$.  Then there exist $\lambda_* \in (0,+\infty]$ and $x_0 \in \R$ such that
\[
\lim_{t\to+\infty} u_\lambda(t,\cdot) = \left\{
\begin{array}{lll}
0 & \text{uniformly in } \R & \text{if }  0 \le \lambda < \lambda_*, \\[1mm]
p(\cdot-x_0) & \text{uniformly in } \R & \text{if }  \lambda = \lambda_*, \\[1mm]
1 & \text{locally uniformly in } \R & \text{if }  \lambda > \lambda_*,
\end{array}\right.
\]
where $p$ is as in Theorem~$\ref{thm:Zla}$.
\end{theorem}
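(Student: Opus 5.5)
The strategy is the Du--Matano route: first a trichotomy for each fixed compactly supported datum, then a monotonicity/sharpness argument along the family $\{\phi_\lambda\}$.

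\textbf{Step 1: trichotomy for one datum.} Fix $u_0\in X_+$ and let $u$ be the solution of \eqref{eq:PDE}. I claim that $u(t,\cdot)$ converges as $t\to+\infty$, locally uniformly, to a nonnegative bounded steady state. The tool is the zero-number (intersection-number) argument of Angenent and Matano.

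Compact support gives, for every $x_0$, a reflection argument: the number of sign changes of $u(t,x_0+\cdot)-u(t,x_0-\cdot)$ is finite and nonincreasing in $t$. Combined with the parabolic Lyapunov (energy) structure, this forces every $\omega$-limit point to be a steady state. Comparing with all steady states $v$ through the zero number of $u-v$ then forces the $\omega$-limit set to be a single steady state.

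Bounded nonnegative steady states of $v''+f(v)=0$ on $\R$ are classified by phase-plane analysis under \eqref{hyp:f1a}--\eqref{hyp:f1b}. The candidates are $0$, $\theta$, $1$, translates $p(\cdot-x_0)$ of the ground state, and periodic solutions oscillating around $\theta$ between $0$ and $\theta^*$. Because $u_0$ has compact support, $u(t,x)\to0$ as $|x|\to\infty$ in a controlled way (Gaussian tails plus a subsolution/supersolution comparison), and the zero-number bounds then exclude $\theta$ and the periodic states. So the limit is $0$ (uniformly), $p(\cdot-x_0)$, or $1$ locally uniformly; uniform convergence in the first two cases follows from exponential decay at infinity together with the stability of $0$ ($f'(0)<0$).

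\textbf{Step 2: sharpness in $\lambda$.} Set $\Sigma_0=\{\lambda:u_\lambda\to0\}$ and $\Sigma_1=\{\lambda:u_\lambda\to1\}$.
\begin{itemize}
\item $\Sigma_0$ contains $0$ and is open. This uses that $0$ is exponentially stable ($f'(0)<0$): once $u_\lambda(T,\cdot)$ is small in $L^\infty$ (and, by $L^1$-continuity plus parabolic smoothing, nearby $u_{\lambda'}(T,\cdot)$ as well), it decays. Smallness of $\phi_\lambda$ in $L^1$ for small $\lambda$ gives $0\in\Sigma_0$ together with an open neighbourhood.
\item $\Sigma_1$ is open. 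Convergence to $1$ means $u_\lambda(T,\cdot)$ exceeds, on a large interval, a compactly supported subsolution that propagates. Continuity in $L^1$ and parabolic estimates transfer this to nearby $\lambda'$.
\item By the comparison principle and monotonicity of $\phi_\lambda$, both sets are intervals: $\Sigma_0=[0,\lambda_*)$ and $\Sigma_1=(\lambda^*,+\infty)$.
\end{itemize}
If $\lambda_*=+\infty$ we are done. Otherwise, on $[\lambda_*,\lambda^*]$ the limit is a ground state, by connectedness and Step 1.

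\textbf{Step 3: uniqueness of the threshold (main obstacle).} I must show $\lambda_*=\lambda^*$. Suppose $\lambda_1<\lambda_2$ both converge to ground states $p(\cdot-x_1)$ and $p(\cdot-x_2)$. Since $\phi_{\lambda_1}\le\phi_{\lambda_2}$ with $\phi_{\lambda_1}\not\equiv\phi_{\lambda_2}$, the strong maximum principle gives $u_{\lambda_1}<u_{\lambda_2}$ for $t>0$. Passing to the limit, $p(\cdot-x_1)\le p(\cdot-x_2)$, which forces $x_1=x_2$.

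To reach a contradiction I would use the instability of $p$. Its linearized operator has a positive principal eigenvalue with a positive eigenfunction $\psi$. The difference $w=u_{\lambda_2}-u_{\lambda_1}>0$ satisfies a linear parabolic equation whose coefficient converges to $f'(p)$. Comparing with $\varepsilon e^{\mu t}\psi$ should show that $w$ cannot tend to $0$, contradicting convergence of both solutions to the same $p$. The delicate point is getting a positive lower bound for $w$ on the compact region where $\psi$ matters, uniformly in time after some $T$. I would first try a Harnack inequality applied to $w$ at time $T$, followed by a comparison for the linearized equation with a slightly perturbed eigenvalue problem on a large bounded interval.
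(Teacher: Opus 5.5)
The paper does not prove this theorem; it quotes it from Du--Matano~\cite{DuMat10}. Your architecture matches that source: convergence of each solution to a steady state, then openness and monotonicity of the extinction and propagation sets, then uniqueness of the threshold. Step~2 is fine as sketched. Two points do not hold up as written.

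\textbf{Step 1.} You invoke the Lyapunov functional $\int_\R(\frac12u_x^2-F(u))\,dx$, with $F(s)=\int_0^sf$. On $\R$ this functional is not bounded below along the flow: it tends to $-\infty$ for a propagating solution, since $F(1)>0$. So it cannot by itself force $\omega$-limit points to be stationary. The cited proof avoids energy altogether. It gets symmetry of the $\omega$-limit points about a common centre from reflection and zero-number arguments. It then gets a single limit from Matano's intersection argument with the stationary solution having the same value and zero slope at that centre.

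Your claim that uniform convergence follows from ``exponential decay at infinity'' is also not justified uniformly in $t$. What gives it is an elementary reflection about points $\ell$ outside $\mathrm{supp}\,u_0\subset[a,b]$. Comparing $u(t,2\ell-x)$ with $u(t,x)$ on $(\ell,+\infty)$ shows that $u(t,\cdot)$ is nonincreasing on $[b,+\infty)$ and nondecreasing on $(-\infty,a]$. This tail monotonicity turns locally uniform convergence to $0$ or to $p(\cdot-x_0)$ into uniform convergence.

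\textbf{Step 3.} You linearize at $p$ and use that $c(t,x)=\big(f(u_{\lambda_2})-f(u_{\lambda_1})\big)/(u_{\lambda_2}-u_{\lambda_1})$ converges to $f'(p)$. This requires $f\in C^1$ on a neighbourhood of $[0,\theta^*]$. The statement, under the paper's standing assumptions, only requires $f$ to be locally Lipschitz and differentiable at $0$. In that generality $f'(p(x))$ need not exist and $c$ need not converge. So your argument proves uniqueness of the threshold only under an added $C^1$ hypothesis; otherwise you need a nonlinear substitute for the eigenvalue argument.

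Under $C^1$, the point you flag as delicate is routine, and no Harnack inequality or time-uniform lower bound is needed. Take the Dirichlet principal eigenpair $(\mu_\ell,\psi_\ell)$ of $\partial_{xx}+f'(p(\cdot-x_0))$ on $J_\ell=(x_0-\ell,x_0+\ell)$. Then $\mu_\ell>0$ for $\ell$ large, because the whole-line principal eigenvalue is positive ($p'$ is a sign-changing eigenfunction with eigenvalue $0$). Choose $T$ with $c\ge f'(p(\cdot-x_0))-\mu_\ell/2$ on $[T,+\infty)\times J_\ell$. Choose $\varepsilon>0$ with $w(T,\cdot)\ge\varepsilon\psi_\ell$ on $J_\ell$; this uses only positivity and continuity of $w(T,\cdot)$ on the compact set $\overline{J_\ell}$. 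Since $w>0=\varepsilon e^{\mu_\ell(t-T)/2}\psi_\ell$ on $\partial J_\ell$, comparison gives $w(t,\cdot)\ge\varepsilon e^{\mu_\ell(t-T)/2}\psi_\ell$ on $J_\ell$. This contradicts $w\to0$.
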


The first results on extinction for ``small" initial data and propagation for ``large" initial data were established in~\cite{AroWei78,FifMcL77,Kan-64} for a larger class of functions~$f$. In addition to the aforementioned papers~\cite{DuMat10,Zla06}, the existence of sharp threshold between extinction and propagation (among monotone families of initial data) was investigated more recently, through different techniques. We refer to~\cite{MatPol16} for non-compactly-supported initial data converging to~$0$ at $\pm\infty$,~\cite{Mur-Zho-13,Mur-Zho-17} for non-compactly-supported radially symmetric non-increasing initial data in $\R$ or $\R^N$. The reference~\cite{LewKar93} provides some enlightening numerical simulations and  a formula of the threshold radius for indicator functions supported on disks and giving rise to solutions converging to a ground state, for the equivalent of~\eqref{eq:PDE} in~$\R^2$ with large cubic nonlinearities~$f$. For initial data~\eqref{eq:IC-alpha-L}, quantitative estimates on~$L^*(\alpha)$ as~$\alpha\to\theta^+$ have been obtained in~\cite{Alf-Duc-Fay-20}. Existence, uniqueness or non-uniqueness of thresholds have also been established for other equations than~\eqref{eq:PDE}, such as equations with super-linearly growing reactions $f$~\cite{FeiPet97}, equations with time-dependent reactions~$f(t,u)$ satisfying $\sup_{t\ge0}\frac{\partial f}{\partial u}(t,0)<0$~\cite{Pol11}, or equations with non-local diffusion~\cite{AlfDuc23,ZhaLi22} or non-local reactions~\cite{AlfChu26}.

Coming back to~\eqref{eq:PDE} with $f$ satisfying \eqref{hyp:f1a}-\eqref{hyp:f1b}, for each $\alpha>0$, the family $\{u^L_\alpha(0,\cdot)\}_{L\ge0}$ given in~\eqref{eq:IC-alpha-L} (with $u^0_\alpha(0,\cdot):=0$) satisfies the condition of Theorem~\ref{thm:DM}. Furthermore, the solution $u^L_\alpha$ with initial datum $u^L_\alpha(0,\cdot)$ is even in $x$. Therefore, combining Theorems~\ref{thm:Zla} and~\ref{thm:DM} together with the parabolic comparison principle, it follows that, under conditions~\eqref{hyp:f1a}-\eqref{hyp:f1b}, for each $\alpha\in(\theta,+\infty)$, there is $L^*(\alpha)\in(0,+\infty)$ such that~\eqref{trichotomy} holds. Moreover, the map $\alpha\mapsto L^*(\alpha)$ is immediately non-increasing, and even decreasing, in $(\theta,+\infty)$ (see Lemma~\ref{lem:alpha-monot} below for more details). We can therefore define
\[
L_\infty:=\lim_{\alpha\to+\infty}L^*(\alpha)\in[0,+\infty).
\]

The parabolic maximum principle yields the following corollary (see below for the proof).

\begin{corollary}\label{cor:limitL}
Assume that $f$ satisfies \eqref{hyp:f1a}-\eqref{hyp:f1b}. The following properties hold.
\begin{enumerate}
    \item[(i)] If $L_\infty>0$ and $0<L \le L_\infty$,  then, for every $\alpha>0$,
    \[
        \lim_{t\to+\infty}u_\alpha^L(t,\cdot)=0\quad \text{uniformly in } \R.
    \]
    \item[(ii)] If $L>L_\infty$, then there exists $\alpha_0>\theta$ such that, for every $\alpha>\alpha_0$,
    \[
        \lim_{t\to+\infty}u_\alpha^L(t,\cdot)=1\quad \text{locally uniformly in } \R.
    \]
\end{enumerate}
\end{corollary}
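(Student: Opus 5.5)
Both parts follow from the trichotomy~\eqref{trichotomy}, valid for every $\alpha>\theta$, together with the monotonicity of $\alpha\mapsto L^*(\alpha)$ on $(\theta,+\infty)$ recorded above (Lemma~\ref{lem:alpha-monot}). The only strict inequality we need is that this map is \emph{decreasing}; from it we get $L^*(\alpha)>L_\infty$ for every $\alpha>\theta$, because $L^*(\alpha)>L^*(\alpha')\ge L_\infty$ for any $\alpha'>\alpha$.

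For (i), assume $L_\infty>0$ and $0<L\le L_\infty$, and split according to $\alpha$. If $\alpha\in(0,\theta]$, then $0\le u_\alpha^L(0,\cdot)\le\theta$. Since $f<0$ on $(0,\theta)$, the solution of the ODE $\dot\xi=f(\xi)$ with $\xi(0)=\alpha$ tends to $0$ when $\alpha<\theta$. It is a spatially constant supersolution, so the comparison principle gives uniform extinction. The case $\alpha=\theta$ needs a little care, because the constant $\theta$ is a steady state. Here we use $\alpha\le\theta<\alpha'$ for any $\alpha'\in(\theta,+\infty)$ and compare $u_\theta^L\le u_{\alpha'}^L$. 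By the first observation, $L\le L_\infty<L^*(\alpha')$, so the first line of~\eqref{trichotomy} applies to $u_{\alpha'}^L$, and $u_{\alpha'}^L(t,\cdot)\to0$ uniformly. This comparison argument in fact handles every $\alpha\le\theta$ at once, so the ODE step is not even needed. If instead $\alpha>\theta$, then $L\le L_\infty<L^*(\alpha)$ and~\eqref{trichotomy} gives extinction directly.

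For (ii), assume $L>L_\infty$. Since $L^*(\alpha)\downarrow L_\infty$ as $\alpha\to+\infty$, there is some $\alpha_0>\theta$ with $L^*(\alpha_0)<L$. Then for every $\alpha>\alpha_0$ we have $L^*(\alpha)\le L^*(\alpha_0)<L$, and the third line of~\eqref{trichotomy} gives $u_\alpha^L(t,\cdot)\to1$ locally uniformly. This part also covers $L_\infty=0$ and every $L>0$.

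The main point to check is the strict inequality $L^*(\alpha)>L_\infty$ in (i). It is exactly what excludes the borderline case $L=L^*(\alpha)$, in which the solution would converge to the ground state $p$ rather than to $0$. It is also the only place where the equality case $L=L_\infty$ matters. If only non-strict monotonicity were available, I would prove strictness directly. Suppose $\theta<\alpha<\alpha'$ and $L=L^*(\alpha)=L^*(\alpha')$. The strong maximum principle gives $u_\alpha^L(t,\cdot)<u_{\alpha'}^L(t,\cdot)$ for $t>0$. Then $u_{\alpha'}^L$ has initial datum $u_{\alpha'}^L(t,\cdot)$ at time $t$, which lies strictly above the solution $u_\alpha^L$ converging to $p$. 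A slightly enlarged support, together with the sharpness in Theorem~\ref{thm:DM} applied at a positive time, would then force propagation, which is a contradiction.
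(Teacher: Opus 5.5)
Your proposal is correct and follows the paper's proof. Strict decrease of $\alpha\mapsto L^*(\alpha)$ (Lemma~\ref{lem:alpha-monot}) gives $L<L^*(\alpha)$ for all $\alpha>\theta$ when $L\le L_\infty$, and $L>L^*(\alpha)$ for all large $\alpha$ when $L>L_\infty$, after which the trichotomy of Theorems~\ref{thm:Zla}--\ref{thm:DM} concludes. For $\alpha\le\theta$ you compare with $u_{\alpha'}^L$, $\alpha'>\theta$, in place of the paper's direct appeal to the maximum principle; this also handles $\alpha=\theta$ cleanly. Your closing sketch of strict monotonicity is too vague to stand on its own, but it is unnecessary given Lemma~\ref{lem:alpha-monot}.
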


The first objective of this manuscript is to understand under which conditions on $f$ the real number $L_\infty$ is positive. The second objective is to use this information to derive unexpected results on favourable effects of fragmentation of the initial datum. In particular, we address the following question: among initial data with the same distribution function as $\alpha\,\mathds{1}_{(-L/2,L/2)}$ (see later for the precise definition), is the interval of size $L$  always the most favourable shape for propagation?  The third objective is to relate our results to the mass concentration principle, which claims that nonnegative solutions of semilinear parabolic equations can be controlled from above in an integral sense by the solutions emanating from the Schwarz symmetrically decreasing rearrangements of the initial data. As a matter of fact, we show with two different counter-examples, the first ones in the literature to our knowledge for problems like~\eqref{eq:PDE}, that this mass concentration principle is not valid in general.


\subsection{Large-amplitude asymptotics}

The next two theorems describe two opposite possible behaviours of the threshold lengths~$L^*(\alpha)$ as $\alpha\to+\infty$. The first one  gives sufficient conditions on~$f$ ensuring that $L_\infty$ is positive. In other words, for such $f$, even very large amplitudes lead to extinction if the support is too narrow.

\begin{theorem}\label{thm:limitL}
Assume that $f$ satisfies \eqref{hyp:f1a} and that there exist $p>2$ and $A>0$ such that
\begin{equation}\label{hyp:f2}
   f(s)\le -A\,(s-1)^p \quad \text{for all } s> 1.
\end{equation}
\begin{enumerate}
    \item[(i)] If $p>3$, then $L_\infty>0$ for every $A>0$.
    \item[(ii)] If $2<p\leq 3$, then there exists $A^*>0$, depending only on $f|_{[0,1]}$ and on $p$, such that $L_\infty>0$ whenever $A>A^*$.
\end{enumerate}
\end{theorem}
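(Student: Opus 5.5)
We argue by comparison with a universal supersolution, independent of $\alpha$, that starts from $+\infty$ on a short interval. By \eqref{hyp:f2} and local Lipschitz continuity there is $K>0$ with $f(s)\le K s$ on $[0,1]$. The bound $f(s)\le -A(s-1)^p$ for $s>1$ then yields a global majorant $f(s)\le g(s)$ for all $s\ge0$, where $g$ is Lipschitz near $[0,1]$ and behaves like $-A(s-1)^p$ at infinity. Since $p>1$, the ODE $\dot w=g(w)$ has a solution $w(t)$ coming down from infinity, with $w(t)\approx (A(p-1)t)^{-1/(p-1)}+1$ for small $t$. The key object is the solution $U$ of $\partial_tU=\partial_{xx}U+g(U)$ with initial datum $+\infty\cdot\mathds{1}_{(-L/2,L/2)}$. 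We construct it as the increasing limit of the solutions with data $\alpha\mathds{1}_{(-L/2,L/2)}$ for the majorant problem; these are bounded above by $w(t)$ for every $\alpha$, so the limit $U$ exists and is finite for $t>0$. By comparison, $u^L_\alpha\le U$ for all $\alpha$. Hence it suffices to show that for some small $L>0$ there is a time $T$ at which $U(T,\cdot)$ lies below a function whose evolution under \eqref{eq:PDE} goes extinct. Extinction follows, for instance, if $U(T,\cdot)\le\theta$ everywhere, or more sharply if $U(T,\cdot)$ lies below a translate of $p(\cdot)$ by Theorem~\ref{thm:Zla}'s ground state ordering (strict subsolutions of $p$ go to $0$). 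Applying Corollary~\ref{cor:limitL} then gives $L^*(\alpha)\ge L$ for all $\alpha$, so $L_\infty\ge L>0$.

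The heart of the proof is a quantitative estimate of the "mass" of $U$ after the initial layer. Let $v=U-1$ where $U>1$. For $s>1$ the reaction is $\le -A v^p$, so $v$ is bounded by the solution of $v_t=v_{xx}-Av^p$ starting from $+\infty\,\mathds{1}_{(-L/2,L/2)}$. Here the exponent enters. For $p>3$ (i.e.\ $p>1+2/N$ with $N=1$), the equation $v_t=v_{xx}-Av^p$ admits no very singular solution, and initial "infinite" data concentrated on a set of measure zero is erased. Quantitatively, the solution from $+\infty\,\mathds{1}_{(-L/2,L/2)}$ has $L^1$ norm at time $t$ tending to $0$ as $L\to0$, uniformly for $t$ in compact subsets of $(0,\infty)$. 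The plan is to prove this by scaling. Set $v(t,x)=A^{-1/(p-1)}L^{-2/(p-1)}V(t/L^2,x/L)$, which maps the problem to the case $L=1$, $A=1$. One checks that $\|v(t)\|_{L^1}=L^{1-2/(p-1)}A^{-1/(p-1)}\|V(t/L^2)\|_{L^1}$. Since $\|V(\tau)\|_{L^1}$ is bounded (by the decay $V\le C\tau^{-1/(p-1)}$ combined with Gaussian tails, and integrability of the large-solution limit for $p>3$), the exponent $1-2/(p-1)>0$ gives smallness as $L\to0$ for any $A$. In the borderline range $2<p\le3$ this exponent is $\le0$, and smallness must instead come from $A^{-1/(p-1)}$. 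This explains the threshold $A^*$ depending only on $f|_{[0,1]}$ (through $K$ and the extinction criterion) and on $p$.

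Next we combine the pieces. Fix a small time $t_0$. The contribution of $f$ on $[0,1]$ over $[0,t_0]$ amplifies by at most $e^{Kt_0}$, so $U(t_0,\cdot)\le 1+\tilde v(t_0,\cdot)$, heat-smoothed, plus a small correction. Then compare on $[t_0,t_0+\tau]$ using the heat semigroup with growth $e^{K\tau}$. The part of size $\le 1$ is supported essentially in a set of width $O(L+\sqrt{t_0})$ and spreads out, while the excess mass is small. After a fixed time $\tau$ the heat kernel bounds $U$ by $e^{K\tau}(L+\|\tilde v\|_{L^1})/\sqrt{4\pi\tau}$, which is $<\theta$ once $L$ is small (resp.\ $A$ large). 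The main obstacle is the $L^1$ estimate of the large solution, in particular making the "$1+$" part rigorous. I would try splitting $U\le 1+v$ via a direct supersolution check: $(1+v)_t-(1+v)_{xx}-g(1+v)\ge -Av^p-g(1+v)$, which is $\ge0$ once $g(1+v)\le -Av^p$ is arranged. The truncation at level $1$ leaves an indicator-like remainder that must itself be shown to spread, and the care needed there concerns mass of order $L$ from the region $\{U\le1\}$ near the support.
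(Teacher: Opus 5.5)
Your reduction $u^L_\alpha\le 1+v$ is correct, where $v$ is the large solution of $v_t=v_{xx}-Av^p$ issued from $+\infty\cdot\mathds{1}_{(-L/2,L/2)}$. The scaling $\|v(t)\|_{L^1}=A^{-1/(p-1)}L^{1-2/(p-1)}\|V(t/L^2)\|_{L^1}$ also correctly explains the dichotomy $p>3$ versus $2<p\le 3$. But this only controls the excess $(u^L_\alpha-1)_+$. Since $1+v$ has infinite mass, it says nothing about $\min(u^L_\alpha,1)$. Your claim that this part is ``supported essentially in a set of width $O(L+\sqrt{t_0})$'' is exactly the estimate that must be proved uniformly in $\alpha$, and nothing you list gives it. The linear bound $e^{\gamma t}G(t,\cdot)\ast(\alpha\mathds{1}_{(-L/2,L/2)})$ has amplitude $\alpha$, and $1+v$ does not decay in $|x|$. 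You flag this yourself as the main obstacle, and it is left open.

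The same problem affects the assertion $\|V(\tau)\|_{L^1}<\infty$. Gaussian tails for the large solution would need tail bounds uniform along the approximating solutions with data $k\,\mathds{1}_{(-1/2,1/2)}$, which you do not supply. You need this finiteness in case (ii) as well, not only for $p>3$.

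The missing idea is the exterior comparison of Proposition~\ref{prop:L1-bound}. On $\{x>L/2\}$, $u^L_\alpha$ lies below the solution of the linear half-line problem $w_t=w_{xx}+\gamma w$ with $w(0,\cdot)=0$ and Dirichlet datum $w(t,L/2)=U(t)$. Here $U$ is the ODE solution from $\alpha$, bounded by $B(t)=1+(A(p-1)t)^{-1/(p-1)}$ independently of $\alpha$ (Lemma~\ref{lem:ODE-bound}). The boundary kernel $P(\tau,\cdot)$ has total mass $(\pi\tau)^{-1/2}$. Hence the exterior mass is at most $\frac{e^{\gamma t}}{\sqrt\pi}\int_0^t B(s)(t-s)^{-1/2}\,ds$, which is finite precisely because $p>2$. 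Together with the interior bound $L\,B(t)$, this handles the part below level $1$ and the excess at once.

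Lemma~\ref{lem:smallL1} then concludes. For $p>3$ the exterior term is $O(\sqrt t+t^{1/2-1/(p-1)})\to 0$, so choose $t_0$ small and then $L$ small. For $2<p\le3$ take $t_1=1/(A(p-1))$ and $L=2/A$, and let $A\to+\infty$. Once this estimate is available, your large-solution and scaling machinery is unnecessary.

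Two smaller points. First, $u(T,\cdot)\le\theta$ does not by itself give extinction, since $\theta$ is an equilibrium; you need, e.g., $\le\theta/2$, or the $L^1$ criterion. Second, $L^*(\alpha)>L$ follows directly from the trichotomy once $u^L_\alpha$ extinguishes for all $\alpha$, not from Corollary~\ref{cor:limitL}.
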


The second theorem deals with the opposite situation,  giving sufficient conditions on~$f$ ensuring that $L_\infty=0$. In other words, for such $f$, for every $L>0$, even an arbitrarily small support can lead to propagation, provided the amplitude $\alpha$ is large enough.

\begin{theorem}\label{th:Linfty-psmall}
Assume that $f$ satisfies \eqref{hyp:f1a}, and that there exist $0\leq p<3$ and $a>0$ such that
\begin{equation}\label{hyp:f3}
    0>f(s)\ge -a\,s^p
    \quad \text{for all } s>1.
\end{equation}
Then the following properties hold.
\begin{enumerate}
    \item[(i)] If $0\leq p \leq 1$, then
    \[
    0<\liminf_{\alpha\to+\infty}\alpha\,L^*(\alpha)
    \le
    \limsup_{\alpha\to+\infty}\alpha\,L^*(\alpha)<+\infty.
    \]
    In particular, $L_\infty=0$.
    \item[(ii)] If $1<p<3$, then there exists $a_*>0$, depending only on $f|_{[0,1]}$ and on $p$, such that $L_\infty=0$ whenever $0<a<a_*$.
\end{enumerate}
\end{theorem}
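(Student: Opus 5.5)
To show $L_\infty=0$, by Corollary~\ref{cor:limitL}(ii) and the monotonicity of $L^*$ in $\alpha$ it suffices to prove the following: for every $L>0$ there is some $\alpha$ such that $u^L_\alpha$ propagates. Propagation will be detected by comparison with a compactly supported subsolution. Fix $R>0$ and a compactly supported function $w_R\le 1$ whose solution converges to $1$. For instance, take $w_R=\beta\,\mathds{1}_{(-R,R)}$ with $\beta\in(\theta^*,1)$ and $R$ large, which propagates by Theorem~\ref{thm:Zla}. It then suffices to show that $u^L_\alpha(t_0,\cdot)\ge w_R$ at some time $t_0>0$. So the goal is a quantitative lower bound on $u^L_\alpha(t,\cdot)$ at a suitable time, given initial mass $M=\alpha L$ concentrated on a small interval.

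\textbf{Lower bound via an ODE/heat comparison.} Let $g(s):=-a\,s^p$ for $s>1$, and extend $g$ to $[0,1]$ by some $g\le f$ there, Lipschitz, with $g\le 0$ and $g(0)=0$. For instance, $g(s)=-Ks$ on $[0,1]$ with $K\ge \sup_{(0,1]}(-f(s)/s)$, adjusted to be continuous at $1$. Then $u^L_\alpha$ is a supersolution of $\partial_t v=\partial_{xx}v+g(v)$. The idea is that the solution of this equation is approximately the heat kernel $\frac{M}{\sqrt{4\pi t}}e^{-x^2/4t}$, damped by the absorption. Set $w=e^{Kt}v$ to absorb the linear part and reduce to a pure power absorption. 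One then constructs an explicit subsolution of the form
\[
\underline v(t,x)=\eta(t)\,\frac{1}{\sqrt{4\pi t}}\,e^{-x^2/(4t)}\ \ (\hbox{truncated}),\qquad \dot\eta\le -C\,\eta^{p}\,t^{-(p-1)/2},
\]
on a time interval $[\tau,t_0]$. Here we use that, for the Gaussian profile $G$, one has $G^p\le \|G\|_\infty^{p-1}G$. Integrating $\dot\eta=-C\eta^pt^{-(p-1)/2}$ from a small time $\tau$ gives $\eta^{1-p}(t)-\eta^{1-p}(\tau)=C'\,(t^{(3-p)/2}-\tau^{(3-p)/2})$. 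This is where $p<3$ enters: the exponent $(3-p)/2>0$ makes the loss finite as $\tau\to0$. Hence, starting from mass $\eta(\tau)\approx M\to\infty$, we get $\eta(t_0)\ge (C't_0^{(3-p)/2})^{-1/(p-1)}$ uniformly in $M$ when $p>1$. For $p\le1$, $\eta(t_0)\gtrsim M$ minus a bounded amount. The first few instants, where the Gaussian approximation of $\alpha\mathds 1_{(-L/2,L/2)}$ is handled, are treated by comparing with the heat flow at time $\tau\sim L^2$.

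\textbf{Case (i), $p\le1$.} The remaining mass at time $t_0$ is of order $M=\alpha L$. Choosing $t_0$ so that a Gaussian of mass $cM$ dominates $w_R$, which requires $cM\gtrsim\sqrt{t_0}$ and $t_0\sim R^2$, shows propagation as soon as $\alpha L\ge C_1$. Hence $\limsup\alpha L^*(\alpha)<\infty$. For the lower bound $\liminf \alpha L^*(\alpha)>0$, use $f\le \max_{[0,1]} f^+ =: \kappa$, or more sharply $f(s)\le Ks$, so that $u\le e^{Kt}$ times the heat solution. If $\alpha L$ is tiny, then at time $1$ we get $u\le e^K\alpha L/\sqrt{4\pi}<\theta$, and the maximum principle with the ODE yields extinction. (For $t\le1$ the bound $\|u\|_\infty\le\max(\alpha,1)$ is not needed, since the heat bound suffices at $t=1$.)

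\textbf{Case (ii), $1<p<3$, and the main obstacle.} The guaranteed level at time $t_0$ is $\eta(t_0)\sim (a\,t_0^{(3-p)/2})^{-1/(p-1)}$ with the Gaussian height $\eta/\sqrt{t_0}$. This must exceed $\beta$ on $(-R,R)$ with $t_0\sim R^2$, where $R$ depends only on $f|_{[0,1]}$. This forces $a$ small, giving $a_*$. The main obstacle is making the subsolution rigorous across the regime where $v$ crosses below $1$, where $g$ switches from the power law to the linear part and the Gaussian tails must remain subsolutions. I would handle this by using the single bound $g(s)\ge -Ks-a s^p$ for all $s\ge0$, which is valid with the choice of $K$ above, together with $e^{-Kt_0}$ factors that depend only on $f|_{[0,1]}$ and $R$. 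I would also carefully check the initial layer $\tau\sim L^2$ as $L\to0$, where the power-law loss is integrable precisely because $p<3$.
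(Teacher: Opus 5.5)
\textbf{Gap in (ii).} Your route for (ii) differs from the paper's, and its core computation is right. For $p\ge1$, the inequality $G^p\le\|G\|_\infty^{p-1}G$ makes $\eta(t)G(t,x)$, with $\dot\eta=-a\eta^p(4\pi t)^{-(p-1)/2}$, a subsolution of $w_t=w_{xx}-aw^p$. The loss $\int_0 t^{-(p-1)/2}\,dt$ is finite exactly because $p<3$. The bound $f(s)\ge-Ks-as^p$ together with the factor $e^{-Kt}$ correctly handles $f$ on $[0,1]$, since $e^{-(p-1)Kt}\le1$.

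The gap is the initial layer, which is where the whole difficulty sits. A Gaussian is positive on all of $\R$, so $\eta(\tau)G(\tau,\cdot)$ cannot be placed under $\alpha\mathds{1}_{(-L/2,L/2)}$. Nor can it be placed under $u^L_\alpha(\tau,\cdot)$ without a lower bound on the tails of $u^L_\alpha(\tau,\cdot)$. ``Comparing with the heat flow'' gives the wrong inequality, because the heat flow (times $e^{\gamma t}$) is a supersolution here. Moreover, your claim $\eta(\tau)\approx M=\alpha L$ is false for $p>1$. Take $f(s)=-a(s^p-1)$ for $s>1$; it satisfies \eqref{hyp:f3} and $f(s)\le-a(s-1)^p$. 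Then Lemma~\ref{lem:ODE-bound} gives $\|u^L_\alpha(\tau,\cdot)\|_{L^\infty(\R)}\le 1+(a(p-1)\tau)^{-1/(p-1)}$ independently of $\alpha$. For $2<p<3$, Proposition~\ref{prop:L1-bound} even bounds the mass at time $\tau$ independently of $\alpha$. So, as written, nothing supplies the starting value of your ODE for $\eta$ at $t=\tau$.

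\textbf{How to repair it.} Drop $\tau$ and modulate the heat flow of the datum itself. Let
$H(t,x):=L^{-1}\big(G(t,\cdot)\ast\mathds{1}_{(-L/2,L/2)}\big)(x)$.
Then $H_t=H_{xx}$, $H(0,\cdot)=L^{-1}\mathds{1}_{(-L/2,L/2)}$, and $0<H\le(4\pi t)^{-1/2}$. With $\eta(0)=\alpha L$ and the same ODE for $\eta$, the function $e^{-Kt}\eta(t)H(t,x)$ is a subsolution that coincides with $u^L_\alpha(0,\cdot)$ at $t=0$. You get $\eta(t_0)^{1-p}=(\alpha L)^{1-p}+c_p\,a\,t_0^{(3-p)/2}$, with $c_p$ independent of $L$. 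Using $H(t_0,x)\ge G(t_0,R+1)$ for $|x|\le R$ and $L\le1$, letting $\alpha\to+\infty$ and then taking $a$ small finishes (ii) as you intended.

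Once repaired, your argument is more elementary than the paper's. The paper uses the Brezis--Peletier--Terman very singular profile $\varphi$, where $p<3$ enters through the existence of $\varphi$. It then compensates both the positive tails of $\varphi$ and the values of $f$ on $[0,1]$ by the constant shift $a^{-1/p-\varepsilon}$.

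\textbf{Case (i).} Your Gaussian ODE is invalid for $0\le p<1$: there $G^p\le\|G\|_\infty^{p-1}G$ reverses, and the power term dominates in the tails. Instead, use $f(s)\ge-\rho s$ on $\R_+$, which holds because $s^p\le s$ for $s\ge1$. This gives $u^L_\alpha\ge\alpha e^{-\rho t}G(t,\cdot)\ast\mathds{1}_{(-L/2,L/2)}$, which is the paper's argument. Your $\liminf$ bound is the same as Lemma~\ref{lem:smallL1}.
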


The shape of the nonlinearity $f$ on $(1,+\infty)$, in particular its decay at infinity, is obviously of crucial importance to determine whether $L_\infty>0$ or $L_\infty=0$. However, the reader will have noticed that, in the intermediate range $2<p<3$, the two behaviours may occur: strong enough absorption ($A>A^*$) gives $L_\infty>0$, whereas sufficiently weak absorption ($0<a<a_*$)  gives $L_\infty=0$. Thus the exponent $p$ alone does not determine the large-amplitude limit: the shape of $f$ {\it on the whole of} $(1,+\infty)$ is relevant (and not only the behaviour at infinity).

The possibility that $L_\infty$ be positive or zero is related to the competition between the  relaxation of large amplitudes and the amount of mass that diffusion can transfer outside the initial support before this relaxation has occurred. Indeed, for $p>0$ and $A>0$, consider the solution of the ordinary differential equation
\be\label{eq:ode}\left\{\baa{l}
U'(t)=-A\,(U(t)-1)^p,\quad t>0,\vspace{3pt}\\
U(0)=\alpha>1.\eaa\right.
\ee
The solution $U$ is defined and decreasing in $[0,T)$, with $T=+\infty$ if $p\ge1$, and $T=\frac{(\alpha-1)^{1-p}}{A(1-p)}$ if $p<1$. Furthermore, $U(t)\to1^+$ as $t\nearrow T$. For every fixed $M>1$ and for any $\alpha>M$, the time needed for $U$ to decrease from $\alpha$ to $M$ is
\[
T_{\alpha\to M}
=\frac1A\int_M^\alpha \frac{ds}{(s-1)^p}.
\]
Hence $T_{\alpha\to M}$ remains bounded as $\alpha\to+\infty$ if and only if $p>1$. 

When $0<p\le1$, this relaxation time diverges as~$\alpha\to+\infty$, leaving diffusion enough time to spread a large amount of mass outside the initial support $[-L/2,L/2]$, even if $L>0$ is small. This is the mechanism behind Theorem~\ref{th:Linfty-psmall}-(i). Actually, this heuristic for $0<p\le1$ holds when $f$ satisfies~\eqref{hyp:f1a} and~\eqref{hyp:f3}, whereas in~\eqref{hyp:f2} we always assume $p\ge1$ to guarantee the local Lipschitz continuity of $f$ around $1$ (since $f(1)=0$).

When $p>1$, the relaxation time from height $\alpha\gg1$ down to values of order $1$ is uniformly bounded with respect to $\alpha$. Under assumptions~\eqref{hyp:f1a} and~\eqref{hyp:f2}, the proof of Theorem~\ref{thm:limitL} makes this intuition quantitative through the $L^1$ estimate
\be\label{ineqB}
\forall\,t>0,\quad\|u_\alpha^L(t,\cdot)\|_{L^1(\R)}\le L\,B(t)+C\,e^{\gamma t}\int_0^t \frac{B(s)}{\sqrt{t-s}}\,ds,
\ee
where $C>0$ and $\gamma>0$ are independent of $\alpha>1$ and of $L>0$, and
\[
B(s)=1+(A\,(p-1)\,s)^{-\frac1{p-1}}
\]
is the ODE (ordinary differential equation) upper bound of $\|u_\alpha^L(t,\cdot)\|_{L^\infty(\R)}$, see~\eqref{eq:L1-bound} below. The integral in~\eqref{ineqB} is understood as $+\infty$ when $p\le2$, in which case~\eqref{ineqB} does not provide any information. In~\eqref{ineqB}, the first term measures the mass that remains inside the initial interval, while the second one controls the mass transferred by diffusion to the exterior region through the edges of the support. The behaviour of the latter term near~$t=0$ leads to two regimes where the above $L^1$ strategy yields $L_\infty>0$:
\begin{enumerate}
\item[-] {\it The case $p>3$.} In this case,
\[
\int_0^t \frac{B(s)}{\sqrt{t-s}}\,ds\to0\quad \text{as } t\to0^+.
\]
Hence one can first choose a short time $t_1>0$ so that the mass diffused outside the initial interval is uniformly small with respect to $\alpha$, and then choose $L>0$ small enough so that the full $L^1$ norm falls below an extinction threshold. This gives $L_\infty>0$ for every $A>0$.
\item[-] {\it The case $2<p\leq 3$.} The preceding short-time argument no longer applies. However, the singularity of $B(s)/\sqrt{t-s}$ at $s=0$ is still integrable when $p>2$. The estimate~\eqref{ineqB} can therefore be evaluated at a suitable positive time, and the resulting bound becomes small when $A$ is large enough. This yields $L_\infty>0$ for $p\in(2,3]$, provided the damping coefficient $A$ is sufficiently large.
\end{enumerate}

The mechanisms leading to $L_\infty=0$ are different:
\begin{enumerate}
\item[-] {\it The case where the negative reaction above $1$ is at most linear.} If $0\leq p\leq 1$ and~\eqref{hyp:f3} holds, then there exists $\rho>0$ such that
\[
f(s)\ge -\rho s \quad \text{for all } s\ge0.
\]
Thus large amplitudes cannot be damped faster than linearly. Very large initial amplitudes can then generate, after a fixed positive time, a supercritical plateau on an interval of prescribed length. This yields $L_\infty=0$, and more precisely $L^*(\alpha)$ is of order $1/\alpha$ as $\alpha\to+\infty$.
\item[-] {\it The case $1<p<3$ with weak absorption.} This is the second mechanism leading to $L_\infty=0$. Here the ODE relaxation time is already uniformly bounded with respect to $\alpha$, so the previous linear comparison argument no longer applies. Instead, the proof of Theorem~\ref{th:Linfty-psmall}-(ii) uses a self-similar profile of the absorption equation
\[
u_t=u_{xx}-u^p.
\]
Such a profile exists in dimension one precisely for $1<p<3$. After a suitable rescaling, it gives a subsolution which lies below a large enough multiple of the indicator function of any prescribed interval and which, at a later time, is larger than $1$ on an interval of length $R>L^*(1)$. The comparison principle then implies propagation.
\end{enumerate}

Although these results cover a broad range of regimes, two natural questions remain open:
\begin{enumerate}
\item[-] {\it The case $1<p\leq 2$.} We know from Theorem~\ref{th:Linfty-psmall}-(ii) that $L_\infty=0$ when the coefficient $a$ in~\eqref{hyp:f3} is small enough. It remains unclear whether this should hold for every $a>0$. 
\item[-] {\it The borderline case $p=3$.} Theorem~\ref{thm:limitL} gives $L_\infty>0$ when the coefficient $A$ in~\eqref{hyp:f2} is large enough. Conversely, the self-similar construction used in Theorem~\ref{th:Linfty-psmall}-(ii) stops at $p<3$. The weak-absorption regime at $p=3$, for instance under a lower bound of the form~\eqref{hyp:f3} with small coefficient $a$, is therefore not settled by the present arguments.
\end{enumerate}


\subsection{A favourable effect of fragmentation}

The other main results of the paper are related to the analysis of the effect of fragmentation of the support of the initial datum, which is now of a more general type than~\eqref{eq:IC-alpha-L}, but still comparable to it in some sense. Our results show that a sufficiently fine fragmentation of the initial support may turn extinction into propagation.  Let us recall that two nonnegative measurable functions $u_0$ and $v_0$ that converge to $0$ at $\pm\infty$
are said to have the same distribution function, or to be equally distributed,
if
\[
|\{x:u_0(x)>\lambda\}|=|\{x:v_0(x)>\lambda\}|
\qquad\text{for every } \lambda\ge0,
\]
where $|E|$ stands for the Lebesgue measure of a measurable set $E\subset\R$. In particular, if $\alpha>0$ and $E,F$ are measurable  bounded sets, then
$\alpha\mathds{1}_E$ and $\alpha\mathds{1}_F$ are equally distributed
if and only if $|E|=|F|$.

For $0<R_1<R_2$ and $n\in\N\setminus\{0\}$, define (see Fig.~\ref{fig:DI}):
\begin{equation}\label{eq:En}
E_n(R_1,R_2):=
\bigcup_{k=0}^{n-1}
\left(-\frac{R_2}{2}+\frac{kR_2}{n},\ -\frac{R_2}{2}+\frac{kR_2}{n}+\frac{R_1}{n}\right)\!\!.
\end{equation}
Thus the Lebesgue measure $|E_n(R_1,R_2)|$ of $E_n(R_1,R_2)$ is equal to $R_1$, but this set is spread across an interval of length approximately equal to $R_2$ when $n$ is large.

\begin{figure}[h!]
\centering
\begin{subfigure}{0.9\textwidth}
\centering
\begin{tikzpicture}[x=8cm,y=1.4cm]
\def\Rone{0.42}   
\def\alp{0.75}    
\draw[->] (-0.55,0) -- (0.55,0) node[right] {$x$};
\draw[->] (0,-0.05) -- (0,1.15);
\fill[blue!60] (-\Rone/2,0) rectangle (\Rone/2,\alp);
\draw[thick] (-\Rone/2,0) rectangle (\Rone/2,\alp);
\draw (-\Rone/2,-0.025) -- (-\Rone/2,0.025);
\draw (\Rone/2,-0.025) -- (\Rone/2,0.025);
\node[below] at (-\Rone/2,0) {$-\frac{R_1}{2}$};
\node[below] at (\Rone/2,0) {$\frac{R_1}{2}$};
\draw (-0.025,\alp) -- (0.025,\alp);
\node[left] at (0,\alp+0.1) {$\alpha$};
\end{tikzpicture}
\caption{$u_0^{\rm blk}(x)=\alpha\,\mathds{1}_{(-R_1/2,R_1/2)}(x)$}
\end{subfigure}
\vspace{1em}
\begin{subfigure}{0.9\textwidth}
\centering
\begin{tikzpicture}[x=8cm,y=1.4cm]
\def\Rone{0.42}   
\def\n{7}         
\def\alp{0.75}    
\draw[->] (-0.55,0) -- (0.55,0) node[right] {$x$};
\draw[->] (0,-0.05) -- (0,1.15);
\foreach \k in {0,...,6} {
    \pgfmathsetmacro{\a}{-0.5 + \k/\n}
    \pgfmathsetmacro{\b}{-0.5 + (\k+\Rone)/\n}
    \fill[blue!60] (\a,0) rectangle (\b,\alp);
    \draw[thick] (\a,0) rectangle (\b,\alp);
}
\draw (-0.5,-0.025) -- (-0.5,0.025);
\draw (0.5,-0.025) -- (0.5,0.025);
\node[below] at (-0.5,0) {$-\frac{R_2}{2}$};
\node[below] at (0.5,0) {$\frac{R_2}{2}$};
\draw (-0.025,\alp) -- (0.025,\alp);
\node[left] at (0,\alp+0.1) {$\alpha$};
\draw[decorate,decoration={brace,amplitude=4pt}]
    (-0.5,0.92) -- (-0.5 + 1/\n,0.92)
    node[midway,above=5pt] {$\frac{R_2}{n}$};
\draw[decorate,decoration={brace,amplitude=4pt}]
    (-0.5,1.08) -- (-0.5 + \Rone/\n,1.08)
    node[midway,above=5pt] {$\frac{R_1}{n}$};
\end{tikzpicture}
\caption{$u_0^{\rm frag}(x)=\alpha\,\mathds{1}_{E_n(R_1,R_2)}(x)$}
\end{subfigure}
\caption{Schematic representation of the initial data $u_0^{\rm blk}$ and $u_0^{\rm frag}$ in Theorem~\ref{thm:frag-fav}. The two data have the same distribution function and their supports have measure $R_1$. For suitable choices of $R_1$, $R_2$, $\alpha$, and $n$, Theorem~\ref{thm:frag-fav} shows that $u_0^{\rm blk}$ leads to extinction, whereas $u_0^{\rm frag}$ leads to propagation.}
\label{fig:DI}
\end{figure}

\begin{theorem}\label{thm:frag-fav}
Assume that $f$ satisfies~\eqref{hyp:f1a}-\eqref{hyp:f1b} and $L_\infty>0$. Then, for any real numbers~$R_1$ and $R_2$ such that $0<R_1<L_\infty<R_2$, there exist $\alpha>\theta$ and an integer $n\ge2$ such that, for the two bang-bang data
\[
    u_0^{\rm blk}:=\alpha\,\mathds{1}_{(-R_1/2,R_1/2)},
    \qquad
    u_0^{\rm frag}:=\alpha\,\mathds{1}_{E_n(R_1,R_2)},
\]
which have the same distribution function, the solution $u^{\rm blk}$ of~\eqref{eq:PDE} with initial datum~$u_0^{\rm blk}$ extinguishes and the solution $u^{\rm frag}$ of~\eqref{eq:PDE} with initial datum $u_0^{\rm frag}$ propagates, that is,~$u^{\rm blk}(t,\cdot)\to0$ as $t\to+\infty$ uniformly in $\R$ and $u^{\rm frag}(t,\cdot)\to1$ as $t\to+\infty$ locally uniformly in~$\R$.
\end{theorem}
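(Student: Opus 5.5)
The extinction of $u^{\rm blk}$ comes from Corollary~\ref{cor:limitL}(i). The work is in showing that $u^{\rm frag}$ propagates for suitable $\alpha$ and $n$; for this I would use homogenization by diffusion over a short time.

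\textbf{Extinction of the block.} Since $0<R_1<L_\infty$, Corollary~\ref{cor:limitL}(i) gives $u_\alpha^{R_1}(t,\cdot)\to0$ uniformly in $\R$ for every $\alpha>0$. So $u^{\rm blk}$ extinguishes whatever $\alpha$ is, and we are free to choose $\alpha$ to suit the fragmented datum.

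\textbf{Choice of $\alpha$.} The fragmented datum has average density $R_1/R_2$ over $(-R_2/2,R_2/2)$, so after homogenization it should look like $(\alpha R_1/R_2)\,\mathds{1}_{(-R_2/2,R_2/2)}$. Fix $R\in(L_\infty,R_2)$, for instance $R=(L_\infty+R_2)/2$. By Corollary~\ref{cor:limitL}(ii) there is $\beta_0>\theta$ such that $u_\beta^{R}$ propagates for every $\beta>\beta_0$. Fix $\beta>\beta_0$ and then choose $\alpha$ so large that $\alpha R_1/R_2>\beta+1$.

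\textbf{Homogenization step.} Fix a small time $\tau>0$ and set $M=\max(\alpha,1)$, so that $0\le u^{\rm frag}\le M$. Since $f$ is Lipschitz on $[0,M]$ with $f(0)=0$, there is $K>0$ with $f(s)\ge -Ks$ on $[0,M]$. Hence $u^{\rm frag}(t,x)\ge e^{-Kt}(G_t*u_0^{\rm frag})(x)$, where $G_t$ is the heat kernel. As $n\to\infty$, $\alpha\mathds{1}_{E_n(R_1,R_2)}$ converges weakly-$*$ to $(\alpha R_1/R_2)\mathds{1}_{(-R_2/2,R_2/2)}$. For fixed $\tau>0$ the convolution with $G_\tau$ is a uniform limit, because $G_\tau$ is uniformly continuous and integrable; concretely, on each period cell the error is bounded by $\alpha R_2/n\cdot\|\partial_xG_\tau\|_{L^1}$. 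Therefore, for $n$ large,
\[
G_\tau*u_0^{\rm frag}\ \ge\ \frac{\alpha R_1}{R_2}\,\bigl(G_\tau*\mathds{1}_{(-R_2/2,R_2/2)}\bigr)-\varepsilon \quad\text{in }\R .
\]
Since $R<R_2$, choosing $\tau$ small first makes $G_\tau*\mathds{1}_{(-R_2/2,R_2/2)}\ge 1-\delta$ on $[-R/2,R/2]$. Choosing $\tau$ also small enough that $e^{-K\tau}$ is close to $1$ (note that $K$ depends on $\alpha$, which is already fixed), we obtain $u^{\rm frag}(\tau,\cdot)\ge\beta$ on $[-R/2,R/2]$. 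The order of choices is: $\beta$, then $\alpha$, then $\tau$, then $n$.

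\textbf{Conclusion.} Since $u^{\rm frag}(\tau,\cdot)\ge\beta\,\mathds{1}_{(-R/2,R/2)}$, the comparison principle gives $u^{\rm frag}(\tau+t,\cdot)\ge u^R_\beta(t,\cdot)$. The right-hand side tends to $1$ locally uniformly. Propagation of $u^{\rm frag}$, i.e.\ convergence to $1$ locally uniformly, then follows from the bound $\limsup u\le 1$, which holds by \eqref{hyp:f1b} through an ODE upper bound.

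\textbf{Main obstacle.} The delicate step is the quantitative homogenization estimate, uniform in $x$, with the right order of parameters. The worry is that the constant $K$ grows with $\alpha$. This is harmless because $\tau$ is chosen after $\alpha$, and $n$ last.
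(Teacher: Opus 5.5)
Your proof is correct and follows essentially the paper's argument. It uses short-time heat-kernel homogenization with the cell-average bound $\frac{R_2}{n}\,\alpha\,\|\partial_x G(\tau,\cdot)\|_{L^1(\R)}$ (the paper's Lemma~\ref{lem:osc}, which bounds the total error after summing over cells, not the error on each cell), a reaction-absorbing subsolution on $[0,\alpha]$ (your $e^{-K t}\,G(t,\cdot)\ast u_0^{\rm frag}$ in place of the paper's $G(t,\cdot)\ast u_0^{\rm frag}-tM_\alpha$), and comparison with a propagating datum $\beta\,\mathds{1}$ on a slightly shrunk interval, with the same order of choices $\beta,\alpha,\tau,n$.
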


Theorem~\ref{thm:frag-fav} shows that a more fragmented initial datum can be more likely to persist at large time. The proof of this counter-intuitive result is based on the fact that diffusion rapidly merges the fragments of the set $E_n(R_1,R_2)$ and creates a broad supercritical region for propagation to occur.

The effect of the spatial distribution of initial data on the large-time dynamics of solutions of equations such as~\eqref{eq:PDE} has been the subject of several earlier references. In~\cite{GarRoqHam12}, for~\eqref{eq:PDE} under assumptions~\eqref{hyp:f1a}, initial data $\mathds{1}_{(-d/2-L/2,-d/2)}+\mathds{1}_{(d/2,d/2+L/2)}$ (with $d>0$ and $L>0$) have been considered and the critical threshold length $L^*_d>0$ separating extinction (for $L<L^*_d$) and propagation (for $L>L^*_d$) has been shown to converge to $2\,L^*(1)$ as $d\to+\infty$. Numerical simulations in~\cite{GarRoqHam12} suggest that $L^*_d$ may be  decreasing with respect to $d$ for small $d>0$, and~\cite{Nad-26} gives a formula for the derivative of $L^*_d$ with respect to $d$, in terms of the unique positive solution of the adjoint parabolic equation linearized around the threshold solution with $L=L^*_d$. For higher-dimensional versions of~\eqref{eq:PDE}, various fragmentation indices of initial distributions of the type $u_0=\mathds{1}_E$ have been used in~\cite{AlfHamRoq24}, defined in terms of the suitably renormalized distance from $E$ to the set of equimeasurable balls, either in the $L^1$ sense or in the Hausdorff sense. These indices are invariant by rigid motion or scaling of $E$. It follows from~\cite{AlfHamRoq24} that in general there is no monotonicity of the large-time dynamics with respect to the fragmentation indices, among equimeasurable sets $E$. These results however concern pairs of sets which are truly fragmented, far from Euclidean balls. We also mention that, for reaction-diffusion equations in bounded domains $\Omega$ with Neumann boundary conditions, the papers~\cite{Hal-Sma-22,Maz-Nad-Tol-21,Nad-Tol-20} have dealt with the optimization of integral quantities such as $\int_\Omega u(T,\cdot)$ with respect to the initial datum under some pointwise and integral constraints.

Coming back to~\eqref{eq:PDE}, Theorem~\ref{thm:frag-fav} has several consequences, which shed light on the unexpected role of fragmentation of the initial support on the large-time dynamics of solutions of~\eqref{eq:PDE}. In the sequel, for real numbers $a_1<b_1<a_2<b_2<\cdots<a_k<b_k$, with $k\in\N\setminus\{0\}$, let us denote
$$E_{a_1,b_1,\cdots,a_k,b_k}:=(a_1,b_1)\cup(a_2,b_2)\cup\cdots\cup(a_k,b_k),$$
whose Lebesgue measure is equal to $|E|=(b_1-a_1)+\cdots+(b_k-a_k)$.

\begin{corollary}\label{cor:one-hole}
Assume that $f$ satisfies~\eqref{hyp:f1a}-\eqref{hyp:f1b} and $L_\infty>0$. Then there exist $\alpha>\theta$, an integer $k\in\N\setminus\{0\}$, and some real numbers
$$a_1<b_1<a_2<b_2<\cdots<a_{k-1}<b_{k-1}<a_k<b_k\ \hbox{ and }\ a_k<b'_k<a'_{k+1}<b'_{k+1}$$
such that the sets
$$\mathcal{E}:=E_{a_1,b_1,\cdots,a_k,b_k}\ \hbox{ and }\ \mathcal{F}:=E_{a_1,b_1,\cdots,a_k,b'_k,a'_{k+1},b'_{k+1}}$$
have the same Lebesgue measure $($in other words, $\mathcal{F}$ is obtained from $\mathcal{E}$ by splitting the connected component $(a_k,b_k)$ of $\mathcal{E}$ into two disjoint components $(a_k,b'_k)\cup(a'_{k+1},b'_{k+1})$ of the same total Lebesgue measure$)$ and the solution of~\eqref{eq:PDE} with initial datum $\alpha\,\mathds{1}_{\mathcal{E}}$ extinguishes, while the solution of~\eqref{eq:PDE} with initial datum $\alpha\,\mathds{1}_{\mathcal{F}}$ does not. 
\end{corollary}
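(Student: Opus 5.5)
We derive Corollary~\ref{cor:one-hole} from Theorem~\ref{thm:frag-fav} with a discrete interpolation argument. We pass from the single block to the fragmented set through a finite chain of sets. Consecutive sets in the chain differ by splitting exactly one connected component into two pieces of the same total measure. The first set in the chain gives extinction and the last gives propagation, so somewhere along the chain the dynamics must switch.

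\textbf{Step 1: fix the data.} Fix $R_1<L_\infty<R_2$. Theorem~\ref{thm:frag-fav} provides $\alpha>\theta$ and $n\ge2$ such that $\alpha\mathds{1}_{(-R_1/2,R_1/2)}$ extinguishes and $\alpha\mathds{1}_{E_n(R_1,R_2)}$ propagates.

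\textbf{Step 2: build the chain.} Write $E_n=\bigcup_{k=0}^{n-1}I_k$, with the intervals $I_k$ ordered from left to right. Define $G_1:=(-R_2/2,\,-R_2/2+R_1)$, a single interval whose left endpoint is that of $I_0$ and whose measure is $R_1$. For $j=1,\dots,n$, let
\[
G_j:=I_0\cup\cdots\cup I_{j-2}\cup J_j,
\qquad
J_j:=\Bigl(-\tfrac{R_2}{2}+\tfrac{(j-1)R_2}{n},\ -\tfrac{R_2}{2}+\tfrac{(j-1)R_2}{n}+\tfrac{(n-j+1)R_1}{n}\Bigr).
\]
Here $J_j$ is the remaining mass packed into one interval starting at the left end of $I_{j-1}$. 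Then $|G_j|=R_1$ for every $j$, and $G_n=E_n$. The set $G_{j+1}$ is obtained from $G_j$ by splitting its last component $J_j$ into $I_{j-1}\cup J_{j+1}$. The left endpoint is unchanged, and the two new pieces are disjoint with a gap because $R_1<R_2$. This is exactly the form $\mathcal E\to\mathcal F$ in the statement, with $k=j$ components: $a_k$ is the left end of $J_j$, $b'_k$ is the right end of $I_{j-1}$, and $(a'_{k+1},b'_{k+1})=J_{j+1}$. Moreover, $G_1$ is a translate of $(-R_1/2,R_1/2)$. By translation invariance of \eqref{eq:PDE}, $\alpha\mathds{1}_{G_1}$ therefore extinguishes.

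\textbf{Step 3: locate the switch.} Let $j_0$ be the largest index $j\in\{1,\dots,n\}$ such that the solution from $\alpha\mathds{1}_{G_j}$ extinguishes. It exists since $j=1$ qualifies, and $j_0<n$ because $G_n$ propagates. Then $\alpha\mathds{1}_{G_{j_0}}$ extinguishes and $\alpha\mathds{1}_{G_{j_0+1}}$ does not. Taking $\mathcal E=G_{j_0}$ and $\mathcal F=G_{j_0+1}$ gives the corollary. Note that "does not extinguish" is all the statement requires, so we never need the trichotomy for the intermediate sets.

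\textbf{Main obstacle.} The only delicate point is the bookkeeping in Step 2: the ordering $a_1<b_1<\dots<a_k<b'_k<a'_{k+1}<b'_{k+1}$ must hold, and $J_{j+1}$ must not overlap $I_{j-1}$. Both follow from $R_1/n<R_2/n$ and from the fact that $J_{j+1}$ starts at $-R_2/2+jR_2/n$. For $k=1$ the interval lists reduce to $\mathcal E=(a_1,b_1)$, which is consistent with the statement's convention.
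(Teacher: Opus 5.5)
Your proof is correct and follows the paper's argument: your chain $G_j$ is exactly the paper's $F^{(j-1)}$, and the switch is located by the same discrete intermediate-value argument. The only cosmetic difference is that you take the largest index giving extinction, whereas the paper takes the smallest index not giving extinction; both produce a valid consecutive pair $(\mathcal{E},\mathcal{F})$.
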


The case $k=1$ in Corollary~\ref{cor:one-hole} would mean that the solution of~\eqref{eq:PDE} emanating from a multiple $\alpha$ of the indicator function of a single interval extinguishes while the solution emanating from an equally distributed initial datum supported on two disjoint intervals does not. Although the exact value of $k$ is not known, Corollary~\ref{cor:one-hole} brings us closer to the conjecture formulated in~\cite{GarRoqHam12,Nad-26} (for $\alpha=1$), which compares the solutions emanating from the indicator function of an interval with that of the union of two intervals of half the length.


\subsection{Two counter-examples to the mass concentration principle}

In another perspective, much work has been devoted to the analysis of the effect of rearrangements on nonnegative solutions of parabolic equations. In particular, Bandle~\cite{Ban76} and Alvino, Trombetti and Lions~\cite{AlvLioTro90} obtained mass concentration comparison results for parabolic equations, under certain assumptions, in bounded domains with homogeneous Dirichlet boundary conditions. Roughly speaking, these results say that any solution with a nonnegative initial datum~$u_0$ is dominated, in an integral sense, by the solution of the same equation with initial datum equal to the radially symmetric non-increasing rearrangement of $u_0$. More precisely, let
$$I_R:=(-R,R),$$
with $R\in(0,+\infty]$. When $R=+\infty$, we understand $I_R=\R$.  
In this section, we denote by
\begin{equation}\label{eq:PDE-R}
  \begin{cases}
    \partial_t u = \partial_{xx}u+f(u), & t>0,\ x\in I_R,\\[2pt]
    u(0,x)=u_0(x), & x\in I_R,
  \end{cases}
\end{equation}
the analogue of~\eqref{eq:PDE} in the  interval $I_R$.

For a nonnegative function $w\in L^\infty(I_R)$  (assuming in addition that it converges to $0$ at $\pm\infty$ if $R=+\infty$), we denote by $w^*$ its Schwarz non-increasing rearrangement, that is, $w^*:I_R\to\R_+$ is nonnegative, even, non-increasing with respect to~$|x|$,  and it has the same distribution function as~$w$, namely 
\be\label{defdistri}
\forall\,\lambda\ge0,\quad
|\{x\in I_R:w(x)>\lambda\}|
=
|\{x\in I_R:w^*(x)>\lambda\}|.
\ee
The function $w^*$ can be made unique by imposing that, say, it be right-continuous with respect to $|x|$. For a locally Lipschitz-continuous function $f:\R_+\to\R$ with $f(0)=0$, consider   problem~\eqref{eq:PDE-R}   with a nontrivial nonnegative initial datum $u_0\in L^\infty(I_R)$, and the same problem with the rearranged initial datum, namely
\begin{equation}\label{eq:PDE2}
  \begin{cases}
    \partial_t v = \partial_{xx}v+f(v), & t>0,\ x\in I_R,\\[2pt]
    v(0,x)=u_0^*(x), & x\in I_R.
  \end{cases}
\end{equation}
If $R<+\infty$, both $u$ and $v$ are supplemented with homogeneous Dirichlet boundary conditions
\be\label{eq:dir}
u(t,\pm R)=v(t,\pm R)=0.
\ee
Let $T:=\min\big(T(u_0),T(u_0^*)\big)\in(0,+\infty]$ be the minimum of the maximal existence times $T(u_0)\in(0,+\infty]$ and $T(u_0^*)\in(0,+\infty]$ of the solutions~$u$ and $v$, respectively. By the maximum principle, both solutions remain nonnegative. Moreover, for every $t\in(0,T(u_0^*))$, the function~$x\mapsto v(t,x)$ is even, continuous, and non-increasing with respect to $|x|$, whence $v^*(t,\cdot)=v(t,\cdot)$. 

The mass concentration comparison principle states that, for many functions $f$, $u(t,\cdot)$ is less concentrated than $v(t,\cdot)$ for all $t\in(0,T)$, written as
\begin{equation}\label{eq:mass_comp}
    u^*(t,\cdot)\preceq v(t,\cdot)
    \quad \hbox{for all } t\in(0,T),
\end{equation}
in the following sense:
\begin{equation}\label{eq:mass_comp2}
    \int_{-r}^{r}u^*(t,x)\,dx
    \le
    \int_{-r}^{r}v(t,x)\,dx
    \quad \hbox{for all } t\in(0,T),\ r\in[0,R).
\end{equation}
When $R<+\infty$, \eqref{eq:mass_comp} follows from \cite{AlvLioTro90} in the linear case $f(u)=\mu u$ with $\mu\in\R$, actually for the analogues of~\eqref{eq:PDE-R},~\eqref{eq:PDE2} and~\eqref{eq:mass_comp2} in a bounded domain of $\R^N$ and in the equimeasurable Euclidean ball, in any dimension $N\ge1$. Inequalities similar to~\eqref{eq:mass_comp}-\eqref{eq:mass_comp2}, also with more general $L^q$ norms instead of~$L^1$ norms, have been shown for equations~\eqref{eq:PDE-R} and~\eqref{eq:PDE2} with convex functions~$f$ such that $f(0)>0$ in~\cite{Ban76} or nonincreasing convex functions $f$ such that $f(0)=0$ in~\cite{DiaGom15}, for linear parabolic equations with quite general and possibly singular zero-order coefficients in~\cite{AlvVol05,AlvVol08,Vol93,VolVol07}, for parabolic equations with degenerate nonlinear diffusion in~\cite{Tra87,Vaz82,Vaz05}, or in~\cite{AlvVol10} for quasilinear parabolic equations with $p$-Laplace type diffusion, zero-order term $c(t,x)|u|^{p-2}u$, and $\partial_t(|u|^{p-2}u)$ instead of $\partial_tu$ in the left-hand side, with $p>2$.

It is therefore natural to ask whether the mass concentration principle~\eqref{eq:mass_comp} holds for semilinear parabolic equations~\eqref{eq:PDE-R} and~\eqref{eq:PDE2} with general functions $f$, in the whole line~$\R$ or in bounded intervals $(-R,R)$. The inequality~\eqref{eq:mass_comp} would give an a priori upper bound of any nonnegative solution by the one emanating from the Schwarz rearranged initial datum. From a biological point of view, it would suggest that the most aggregated initial population has the best chance of persistence among all initial data with the same distribution function. 

However, it turns out that this mass concentration principle is false in general, and we  provide two different counter-examples. The first one shows that~\eqref{eq:mass_comp} fails for~\eqref{eq:PDE-R} and~\eqref{eq:PDE2}, in $\R$ or in bounded intervals with Dirichlet boundary conditions~\eqref{eq:dir}, with simple standard functions~$f:[0,1]\to\R$ satisfying $f(0)=f(1)=0$.

\begin{theorem}\label{thm:frag-fav2}
Let $f:[0,1]\to\R$ be any Lipschitz continuous function satisfying $f(0)=f(1)=0$ and one of the three following  conditions: a)~either~\eqref{hyp:f1a}; b)~or there is $\theta\in(0,1)$ such that $f=0$ on $[0,\theta]\cup\{1\}$ and $f>0$ on $(\theta,1)$; c)~or $f>0$ on $(0,1)$. Then the following conclusions hold. 
\begin{enumerate}
\item [(i)]  There exists a real number $L^*\ge0$ satisfying the following property: for any $L>L^*$, there is $d_0\ge0$ such that, for every $d\ge d_0$ and $x_0\in\R$, the solutions $u$ and $v$ of~\eqref{eq:PDE-R} and~\eqref{eq:PDE2} $($with $R=+\infty$$)$, with respective initial data
$$u_0:=\mathds{1}_{(x_0-L,x_0)\cup(x_0+d,x_0+d+L)}\ \hbox{ and }\ u_0^*=\mathds{1}_{(-L,L)},$$
both have maximal existence time $+\infty$ and range in $[0,1]$, but do not satisfy the mass concentration comparison~\eqref{eq:mass_comp}, that is, there exist $t>0$ and $r>0$ such that
\be\label{eq:nomass}
\int_{-r}^{r}u^*(t,x)\,dx>\int_{-r}^{r}v(t,x)\,dx.
\ee
Moreover, one can choose $L^*=L^*(1)$ in case~a), and $L^*=0$ in case~c) if $\liminf_{s\to0^+}f(s)/s^3>0$.
\item [(ii)] For any real number $R>0$, there is $M_0>0$ with the following property: for any $M\ge M_0$, there are initial data $u_0:=\mathds{1}_E$ with some bounded sets $E\Subset I_R=(-R,R)$, such that, when $f$ is replaced by $M\,f$, the solutions $u$ and~$v$ of~\eqref{eq:PDE-R} and~\eqref{eq:PDE2} with Dirichlet boundary conditions~\eqref{eq:dir}, have maximal existence time~$+\infty$ and range in $[0,1]$, but do not satisfy the mass concentration comparison~\eqref{eq:mass_comp}. More precisely, there exist $t>0$ and $r\in(0,R)$ satisfying~\eqref{eq:nomass}.
\end{enumerate}
\end{theorem}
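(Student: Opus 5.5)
The plan is to exploit the fact that two well-separated propagating blobs carry about twice as much mass as a single one. In each of the cases a), b), c), a solution starting from $\mathds{1}_{(-L,L)}$ with $L$ above some threshold spreads at a positive asymptotic speed $c^*>0$, where $c^*$ is the speed of the minimal or unique travelling front. So at a large time $t$ its total mass is $2c^*t+o(t)$. The datum $u_0$ consists of two blocks of length $L$. If they are far enough apart, each block produces its own spreading region of mass about $2c^*t$ before the two interact, giving total mass about $4c^*t$. Choosing $r$ large enough that $(-r,r)$ captures essentially all of $v(t,\cdot)$, while $u^*(t,\cdot)$ on $(-r,r)$ captures at least the mass of the two disjoint spreading regions, produces~\eqref{eq:nomass}.

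\textbf{Step 1 (choice of $L^*$ and spreading).} In case a), take $L^*=L^*(1)$. A single block of length $L>L^*(1)$ propagates by Theorem~\ref{thm:Zla}, and since $2L>L$, so does $u_0^*=\mathds{1}_{(-L,L)}$. In cases b) and c), take $L^*$ to be a propagation threshold for indicator data given by the classical Aronson--Weinberger theory; when $\liminf_{s\to0^+} f(s)/s^3>0$ the hair-trigger effect gives $L^*=0$. All solutions range in $[0,1]$ and are global, since $0\le u_0\le 1$ and $f(0)=f(1)=0$.

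The two spreading facts I need, for a solution $w$ from a single block and for $v$, are the following. For every $\varepsilon>0$ and $t\ge t_\varepsilon$:
\begin{itemize}
\item[$\bullet$] lower bound: $w(t,\cdot)\ge 1-\varepsilon$ on an interval of length $2(c^*-\varepsilon)t$;
\item[$\bullet$] upper bound: $\int_{\R} v(t,x)\,dx\le 2(c^*+\varepsilon)t+C_\varepsilon$.
\end{itemize}
The upper bound comes from $v\le 1$ on $|x|\le (c^*+\varepsilon)t$, together with an exponentially decaying supersolution in the tail. In case a) it can also be read off from the convergence to a pair of diverging fronts (Fife--McLeod). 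I expect this integral upper bound, including the tails uniformly in $t$, to be the main technical obstacle in cases b) and c). There I would use supersolutions of the form $\min\big(1,\phi(\pm x-(c^*+\varepsilon)t-\xi)\big)$ built from a travelling front of speed slightly above $c^*$, which decays exponentially.

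\textbf{Step 2 (the comparison).} Fix $\varepsilon$ small and $t$ large so that
\[
4(c^*-\varepsilon)(1-\varepsilon)t>2(c^*+\varepsilon)t+C_\varepsilon+1.
\]
Then choose $d_0$ larger than $4c^*t+2L$. For $d\ge d_0$, the maximum principle gives $u\ge\max(w_1,w_2)$, where $w_i$ solves the problem from the $i$-th block alone. The two plateaus of length $2(c^*-\varepsilon)t$ are therefore disjoint. Take $r$ with $2r\ge 4(c^*-\varepsilon)t$ and with $r$ also larger than the support scale of $v(t,\cdot)$. Then $\int_{-r}^r u^*(t,\cdot)=\sup_{|E|=2r}\int_E u(t,\cdot)$ is at least the mass on the two plateaus, while $\int_{-r}^r v(t,\cdot)\le\int_{\R}v(t,\cdot)$. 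This yields~\eqref{eq:nomass}, uniformly in $x_0$ by translation invariance.

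\textbf{Step 3 (part (ii)).} The change of variables $y=\sqrt M x$ transforms the problem for $Mf$ on $(-R,R)$ into the problem for $f$ on $(-R\sqrt M,R\sqrt M)$. Fix $L,d,t,r$ as in Step 2, where the inequality~\eqref{eq:nomass} holds with a strict positive margin $\delta$. Translate the two-block datum so that it sits inside $(-R\sqrt M,R\sqrt M)$ far from the boundary. As $M\to\infty$, the Dirichlet solutions on this growing interval converge on $[0,t]$ to the whole-line solutions, uniformly on compacts, with small $L^1$ error. For the upper side this follows from the comparison ``Dirichlet $\le$ whole line''; for the lower side, from a subsolution obtained by subtracting a heat-kernel boundary correction controlled by Gaussian tails. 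So for $M\ge M_0$ both integrals move by less than $\delta/3$, and the strict inequality persists. Scaling back gives $E\Subset(-R,R)$ with $r/\sqrt M\in(0,R)$.
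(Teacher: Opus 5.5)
Your architecture matches the paper's: two far-apart blocks each dominating a single-block solution, $\int_{-r}^r u^*(t,\cdot)=\sup_{|E|=2r}\int_E u(t,\cdot)$, and a Dirichlet approximation plus parabolic scaling for (ii). However, Step~1 relies on a bound you do not establish and that your proposed supersolution cannot give, namely $\int_{\R}v(t,x)\,dx\le 2(c^*+\varepsilon)t+C_\varepsilon$.

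\begin{itemize}
\item In case c) with $f(s)=o(s)$ as $s\to0^+$, for instance $f(s)=s^3(1-s)$ --- exactly the subcase $\liminf_{s\to0^+}f(s)/s^3>0$ singled out in the statement --- fronts with speed $c>c^*$ leave $(\phi,\phi')=(0,0)$ along the centre direction and behave like $\sqrt{c/(2z)}$ as $z\to+\infty$. They are not integrable, so your supersolution only yields $\int_\R v\le+\infty$.
\item In cases a) and b) the front speed is unique, so $f$ has no front with speed above $c^*$.
\item In all cases a front of $f$ takes values in $(0,1)$, so $\min\big(1,\phi(\pm x-\xi)\big)$ can never lie above $\mathds{1}_{(-L,L)}$ at $t=0$. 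You would need a larger nonlinearity, extended beyond $1$, whose exponentially decaying front has nearby speed and reaches values above $1$.
\item The Fife--McLeod result is an $L^\infty$ convergence statement and gives no control of the mass in the tails uniformly in $t$.
\end{itemize}

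The missing idea is that tail control is unnecessary. Take $r$ of order $c^*t$ instead of letting it exceed the ``support scale'' of $v(t,\cdot)$, which is positive everywhere anyway. The paper fixes $r=3c^*T/2$ and two disjoint windows $I_1,I_2$ of length $3c^*T/2$ centred at the two blocks. For any $\gamma'>c^*$ one has $\int_{-r}^rv(T,\cdot)\le 2\gamma'T+2r\max_{|x|\ge\gamma'T}v(T,\cdot)$, hence $\int_{-r}^rv(T,\cdot)\le2c^*T+o(T)$. This uses only the uniform spreading statement $\max_{|x|\ge\gamma't}v(t,\cdot)\to0$ for $\gamma'>c^*$, which is valid in all three cases. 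Meanwhile each window carries mass at least $\int_{-3c^*T/4}^{3c^*T/4}U(T,\cdot)\sim 3c^*T/2>c^*T$.

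In your notation, take $r=2(c^*-\varepsilon)t$ and bound $\int_{-r}^r v(t,\cdot)\le2(c^*+\varepsilon)t+2r\max_{|x|\ge(c^*+\varepsilon)t}v(t,\cdot)$; this closes Step~2 directly.

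Step~3 is fine in substance. However, the boundary correction you subtract must carry the factor $e^{\Lambda t}$, with $\Lambda$ the Lipschitz constant of $f$, to give a subsolution. This is the paper's bound $0\le\overline u-u_\rho\le e^{\Lambda T}M_\rho$.
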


The previous counter-example shows that the mass concentration principle fails in general. The next counter-example shows that this principle is false in a stronger sense for equations set in $\R$: there are cases when a bounded nonnegative solution $u$ of~\eqref{eq:PDE-R} converges to a positive constant locally uniformly in $\R$ as $t\to+\infty$ while the solution of~\eqref{eq:PDE2} in $\R$ converges to $0$ uniformly in $\R$ as $t\to+\infty$.

\begin{theorem}\label{thm:frag-fav3}
There exist $C^\infty$ functions $f:[0,1]\to\R$ with $f(0)=0$, and initial data $u_0:=\mathds{1}_E$ for some bounded sets $E\subset\R$, such that the solutions $u$ and $v$ of~\eqref{eq:PDE-R} and~\eqref{eq:PDE2} $($with $R=+\infty$$)$ have maximal existence time $+\infty$ and range in $[0,1]$, but do not satisfy the mass concentration comparison~\eqref{eq:mass_comp}. Moreover, $v(t,\cdot)\to0$ uniformly in $\R$ as $t\to+\infty$, while $u(t,\cdot)$ converges to a positive constant locally uniformly in $\R$ as $t\to+\infty$: therefore, for every fixed $r>0$,~\eqref{eq:nomass} holds for all $t$ large enough, and~\eqref{eq:mass_comp} is violated for all $t$ large enough.
\end{theorem}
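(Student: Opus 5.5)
Theorem~\ref{thm:frag-fav3} is an existence statement, so the plan is to build one explicit pair $(f,E)$. The nonlinearity will have an extinction-favouring part near $0$ and a flat positive level that is attracting from below. Take $f\in C^\infty([0,1])$ of the form
\[
f(s)=-\mu s \ \hbox{ for } s\in[0,\beta],\qquad f\le 0 \ \hbox{ on } [0,\beta'],\qquad f(\beta')=0,\qquad f>0 \ \hbox{ on } (\beta',\gamma),\qquad f(\gamma)=0,
\]
with $f<0$ on $(\gamma,1]$, where $0<\beta<\beta'<\gamma<1$. On $[0,\gamma]$ this $f$ is a rescaled bistable nonlinearity with positive mass. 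Solutions with data in $[0,1]$ then stay in $[0,1]$, because $0$ is a subsolution and $1$ is a supersolution ($f(1)\le 0$), so both solutions are global. The positive constant limit for $u$ will be $\gamma$.

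\textbf{Extinction of $v$.} Take $E$ of total measure $|E|=\ell$ and $u_0^*=\mathds{1}_{(-\ell/2,\ell/2)}$. The first step is to make $v\to0$ by killing the mass before the tail of $f$ can act. Compare $v$ with $w=e^{-\mu' t}\,(G_t*\mathds{1}_{(-\ell/2,\ell/2)})$, where $G_t$ is the heat kernel and $\mu'>0$ is a lower bound for the absorption of $f$ on the range where $v$ lives.

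The main issue is that $f$ is positive on $(\beta',\gamma)$, so $v$ is not automatically a subsolution of linear decay. The fix is to make $f$ strongly negative on $(\gamma,1]$, with very large magnitude. Then $v$ drops below $\gamma$ almost instantly. I would also take $\ell$ small, so that by the heat estimate $v(t)\le \ell/\sqrt{4\pi t}$, and use the ODE bound $v\le$ solution of $\dot V=f(V)$, $V(0)=1$, together with this heat bound. At a fixed time $t_0$ this gives $v(t_0,\cdot)\le \beta$. After that, $v$ decays exponentially, since $f(s)\le0$ for $s\le\beta'$ and the maximum principle keeps $v\le\beta$.

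To get $\|v(t_0)\|_\infty\le\beta$, I would use the Duhamel bound: $v\le G_{t_0}*u_0^* + \int_0^{t_0}\max(f,0)\,ds$, with $\max f_+$ made small compared with $\beta$. In other words, choose the bump of $f$ on $(\beta',\gamma)$ small in height but with positive integral on $[0,\gamma]$. That integral can be made positive by making the bump wide relative to $\beta'$.

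\textbf{Persistence of $u$.} Take $E=E_n(\ell,R_2)$ with $R_2$ large and $n$ large. By the homogenization estimate $G_t*\mathds{1}_{E_n}\to (\ell/R_2)\,G_t*\mathds{1}_{(-R_2/2,R_2/2)}$, it follows that at a small time $t_1$ the value of $u(t_1,\cdot)$ is at least about $\ell/R_2-\varepsilon$ on an interval of length about $R_2$. Choose $\ell/R_2>\beta'$, say $\ell/R_2$ slightly below $\gamma$. This needs $\beta'$ very small, which is consistent with the choices above. Then $u(t_1)$ lies above $(\beta'+\delta)\mathds{1}_{J}$ with $|J|$ larger than the Zlato\v{s} threshold length for the bistable nonlinearity $f|_{[0,\gamma]}$ at level $\beta'+\delta$.

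Apply Theorem~\ref{thm:Zla} to $f/\gamma$, rescaled to $[0,1]$, for a comparison subsolution capped at $\gamma$. This gives $\liminf u\ge\gamma$ locally uniformly. The upper bound $\limsup u\le\gamma$ follows from a spatially homogeneous supersolution ODE starting at $1$, since $f<0$ on $(\gamma,1]$. Hence $u\to\gamma$ locally uniformly, and integrating gives \eqref{eq:nomass} for large $t$, because $\int_{-r}^r u^*\ge\int_{-r}^r u\to 2r\gamma$ while $\int_{-r}^r v\to0$.

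The main obstacle is the compatibility of the parameters. The level $\ell/R_2$ must exceed the ignition value $\beta'+\delta$. At the same time $v$ must be driven below $\beta$, so the strong absorption on $(\gamma,1]$ and the smallness of $f_+$ have to beat the concentrated datum, while the Zlato\v{s} length at level $\beta'+\delta$ must stay below $R_2$. I would fix the shape of $f$ on $[0,\gamma]$ first, then choose $\ell$, $R_2$, the absorption strength on $(\gamma,1]$, and finally $n$.
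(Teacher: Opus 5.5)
Your extinction step for $v$ has a genuine gap, and it cannot be repaired by tuning parameters: the estimate you use cannot tell $v$ from $u$. The bound $v(t_0,\cdot)\le G_{t_0}\ast u_0^*+t_0\max f_+\le \ell/\sqrt{4\pi t_0}+t_0\max f_+$ uses only $\|u_0^*\|_{L^1(\R)}=\ell$ and $f\le\max f_+$. Since $\|u_0\|_{L^1(\R)}=\ell$ as well, the same computation gives $u(t_0,\cdot)\le\beta$. Your own subsequent argument then forces $u(t,\cdot)\to0$, which contradicts the persistence step.

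More generally, $G_t$ is symmetric decreasing, so the Hardy--Littlewood inequality gives $\sup_x(G_t\ast u_0)(x)\le(G_t\ast u_0^*)(0)$. Hence no comparison with the heat flow plus a constant or linear reaction term can prove extinction of $v$ without also proving it for $u$. The two requirements in your ``main obstacle'' paragraph are therefore incompatible, not just delicate to balance: $\ell/R_2>\beta'+\delta$ with $R_2$ beyond the threshold length, and $\ell/\sqrt{4\pi t_0}+t_0\max f_+\le\beta$. The strong absorption on $(\gamma,1]$ is the right ingredient, but you never actually use it. The ODE bound only brings $v$ down to the stable level $\gamma$, which gives no extinction.

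The missing idea is an extinction mechanism that is sensitive to concentration. Superlinear absorption above the plateau must destroy the mass of the single block before diffusion can carry it out of the block. The fragmented datum, with $n$ large, is instead homogenised by diffusion on a time scale much shorter than the absorption time, so it keeps its mass.

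The paper makes this quantitative in Proposition~\ref{prop:L1-bound}. Inside the block it uses an ODE bound $B(t)$ that does not depend on the amplitude. Outside, the half-line representation bounds the exported mass by a constant times $\int_0^tB(s)(t-s)^{-1/2}\,ds$, which is finite only under \eqref{hyp:f2} with exponent $>2$. Combined with Lemma~\ref{lem:smallL1}, this gives $L_\infty>0$ (Theorem~\ref{thm:limitL}). Theorem~\ref{thm:frag-fav}, via Lemma~\ref{lem:osc}, then produces $\alpha$ and $n$ such that $\alpha\mathds{1}_{(-R_1/2,R_1/2)}$ extinguishes while $\alpha\mathds{1}_{E_n(R_1,R_2)}$ propagates. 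The proof of Theorem~\ref{thm:frag-fav3} then just rescales: $f(s)=g(\alpha s)/\alpha$ and $u=U/\alpha$.

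In your normalisation, this amounts to three changes:
\begin{enumerate}
\item take the plateau $\gamma$, which plays the role of $1/\alpha$, small compared with the datum height $1$;
\item impose $f(s)\le-A\gamma^{1-q}(s-\gamma)^q$ on $(\gamma,1]$ with $q>3$, or with $q\in(2,3]$ and $A$ large;
\item replace your Duhamel bound for $v$ by this $L^1$-flux argument.
\end{enumerate}
With these changes your homogenisation step for $u$ is essentially the paper's. You must also absorb the loss $t_1\max_{[0,1]}|f|$ during homogenisation, by choosing $t_1$ after fixing the absorption strength and choosing $n$ last.
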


Theorem~\ref{thm:frag-fav3} is actually a consequence of Theorems~\ref{thm:limitL} and~\ref{thm:frag-fav}. We stated the result with $C^\infty([0,1])$ functions~$f$ to shed light on the fact that even smooth functions $f$ can give counter-examples to~\eqref{eq:mass_comp}. Actually, if one considers any locally Lipschitz continuous function $g:\R_+\to\R$ such that $g(0)=0$, differentiable at~$0$ and satisfying the assumptions~\eqref{hyp:f1a} and~\eqref{hyp:f2} with any $p>3$ and~$A>0$, or with $p\in(2,3]$ and $A>0$ large enough, then the rescaled functions $f$ defined in $[0,1]$ by $f(s)=\frac{g(\alpha\,s)}{\alpha}$ with large $\alpha>0$ are functions for which the conclusion of Theorem~\ref{thm:frag-fav3} holds. In particular, the functions $f$ may not be of class $C^\infty([0,1])$. Typical examples of functions~$f$ for which the conclusion of Theorem~\ref{thm:frag-fav3} holds are then given in $[0,1]$ by:
$$f(s)=C\,s\,(\alpha\,s-\theta)\,(1-\alpha\,s)\,|1-\alpha\,s|^q$$
with any given real numbers $C>0$, $q>0$ and $\theta\in(0,2/(q+4))$ (which ensures ${\int _0^{1/\alpha}\!f(s)ds\!>\!0}$ as one can check) and any $\alpha>0$ large enough (depending on $C$, $q$ and~$\theta$), or by the cubic nonlinearity
$$f(s)=C_1\,s\,(\alpha\,s-\theta)\,(1-\alpha\,s)\ \hbox{ for $s\in[0,1/\alpha]$}$$
with any given $\theta\in(0,1/2)$ and $C_1>0$, and $f(s)\le-\frac{C_2(\alpha\,s-1)^3}{\alpha}$ in $[1/\alpha,1]$ with any $C_2>0$ large enough (depending on $C_1$ and $\theta$) and $\alpha>0$ large enough (depending on $C_1$, $\theta$ and~$C_2$). In Theorem~\ref{thm:frag-fav2}, the functions $f$  can be of class~$C^\infty([0,1])$ as well, but not necessarily.

We point out that, in Theorem~\ref{thm:frag-fav3}, the solutions $u$ of~\eqref{eq:PDE-R} converge to positive constants as $t\to+\infty$ locally uniformly in $\R$, while the solutions $v$ of~\eqref{eq:PDE2} in $\R$ converge to $0$ uniformly in $\R$, whence the mass concentration principle~\eqref{eq:mass_comp} is violated for all $t$ large enough. On the other hand in Theorem~\ref{thm:frag-fav2}-(i),  as revealed by the proof,  both solutions $u$ and $v$ converge to $1$ as $t\to+\infty$ locally uniformly in $\R$, and the mass concentration principle~\eqref{eq:mass_comp} is shown to be violated only for some $t>0$. Lastly, the functions $f$ considered in Theorem~\ref{thm:frag-fav3} could also serve as counter-examples to~\eqref{eq:mass_comp} in bounded intervals $(-R,R)$, after multiplying them by a sufficiently large constant  and rescaling the supports of the initial data~$u_0$.

\vskip 0.4cm
\noindent{\bf{Outline of the paper.}} In Section~\ref{sec2}, we first establish some preliminary $L^\infty$ and $L^1$ estimates for the solutions of~\eqref{eq:PDE} with initial data of the type~\eqref{eq:IC-alpha-L}, by bounding them from above by explicit upper bounds of the solutions of the ordinary differential equation $U'(t)=f(U(t))$ with $U(0)=\alpha$. Section~\ref{sec3} is devoted to the proof of the main results, namely Theorems~\ref{thm:limitL} and~\ref{th:Linfty-psmall} on whether $L_\infty$ is positive or zero, Theorem~\ref{thm:frag-fav} and Corollary~\ref{cor:one-hole} on the effect of fragmentation of the initial datum on the extinction or propagation properties of the solutions of~\eqref{eq:PDE} at large time, and Theorems~\ref{thm:frag-fav2} and~\ref{thm:frag-fav3} on different counter-examples to the mass concentration principle for equations~\eqref{eq:PDE-R} and~\eqref{eq:PDE2} in the whole line, or in bounded intervals $(-R,R)$ with Dirichlet boundary conditions~\eqref{eq:dir}.


\section{Preliminary estimates}\label{sec2}

We start in this section with a series of preparatory steps of the main results.

\begin{lemma}\label{lem:alpha-monot}
Assume that $f$ satisfies~\eqref{hyp:f1a}-\eqref{hyp:f1b}. Then the map $\alpha\mapsto L^*(\alpha)$ is decreasing in $(\theta,+\infty)$.
\end{lemma}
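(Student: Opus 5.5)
We argue by comparison and use the trichotomy~\eqref{trichotomy}, which holds for every $\alpha\in(\theta,+\infty)$ (from Theorems~\ref{thm:Zla} and~\ref{thm:DM}). Fix $\theta<\alpha_1<\alpha_2$ and set $L_1:=L^*(\alpha_1)$. The first step is non-strict monotonicity. Since $u^{L}_{\alpha_1}(0,\cdot)\le u^{L}_{\alpha_2}(0,\cdot)$, the parabolic comparison principle gives $u^L_{\alpha_1}\le u^L_{\alpha_2}$. Take any $L>L_1$. Then $u^L_{\alpha_1}\to1$ locally uniformly, so $u^L_{\alpha_2}$ cannot tend to $0$ uniformly, and it cannot tend to $p$ either, since $p<\theta^*<1$. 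Hence $L>L^*(\alpha_2)$. This shows $L^*(\alpha_2)\le L_1$.

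The second step is strict monotonicity, which requires ruling out $L^*(\alpha_2)=L_1$. Consider $w:=u^{L_1}_{\alpha_1}$ and $z:=u^{L_1}_{\alpha_2}$. By Theorem~\ref{thm:Zla} and the comments after Theorem~\ref{thm:DM}, $w(t,\cdot)\to p$ uniformly. We have $z\ge w$, and $z\not\equiv w$ because the initial data differ on a set of positive measure. The strong maximum principle (applied to $z-w\ge0$, which solves a linear equation with bounded coefficient, using local Lipschitz continuity of $f$) then gives $z(t_0,\cdot)>w(t_0,\cdot)$ everywhere for any $t_0>0$.

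Next we exploit the sharpness of the threshold inside the monotone family $\{u^L_{\alpha_1}\}_L$. By continuity of solutions with respect to initial data in $L^1$ (or $L^1_{loc}$), $u^{L_1+\varepsilon}_{\alpha_1}(t_0,\cdot)$ is close to $w(t_0,\cdot)$ uniformly on compact sets. We want to conclude $z(t_0,\cdot)\ge u^{L_1+\varepsilon}_{\alpha_1}(t_0,\cdot)$ for some small $\varepsilon>0$. This is the main obstacle, because strict positivity of $z-w$ only holds pointwise, while the tails as $|x|\to\infty$ are delicate.

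To handle it, a cleaner route is to compare initial data directly with a dilation or translation. One option is to note that $\alpha_2\mathds{1}_{(-L_1/2,L_1/2)}$ dominates at time $t_0$ the solution from $\alpha_1\mathds{1}_{(-L_1/2-\varepsilon,L_1/2+\varepsilon)}$, using heat-kernel Gaussian tail comparisons. Both solutions decay like Gaussians at infinity, and the ratio of the tails is controlled by $\alpha_2/\alpha_1>1$ against the factor $e^{O(\varepsilon|x|/t_0)}$. This fails for very large $|x|$, so instead we use a different argument. Since $z\ge w$ and $z\not\equiv w$, if $z\to p(\cdot-x_0)$ held, then by evenness $x_0=0$. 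We then apply the Du--Matano zero-number/uniqueness argument: two distinct ordered solutions cannot both converge to the same ground state $p$, because $p$ is linearly unstable. Concretely, along the family $\lambda\mapsto$ data given by $\phi_\lambda$ interpolating from $\alpha_1\mathds{1}_{(-L_1/2,L_1/2)}$ to $\alpha_2\mathds{1}_{(-L_1/2,L_1/2)}$ (with $\phi_\lambda$ first enlarging the amplitude in a monotone continuous way), Theorem~\ref{thm:DM} yields a unique $\lambda_*$ with convergence to a ground state, and propagation for every $\lambda>\lambda_*$. Since the starting member already converges to $p$, this forces $\lambda_*$ to be at the start, so $z$ propagates.

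Therefore $L_1>L^*(\alpha_2)$. The key technical point is arranging a family satisfying~\eqref{hyp:DuMat} that passes through both data. For example, take $\phi_\lambda=\min(\lambda,\alpha_1)\mathds{1}_{(-L_1/2,L_1/2)}$ for $\lambda\le\alpha_1$, and $\lambda\mathds{1}_{(-L_1/2,L_1/2)}$ for $\lambda\in[\alpha_1,\alpha_2]$. This family is continuous in $L^1$, strictly increasing in $L^\infty$, and starts from $0$. Then $\lambda_*=\alpha_1$, and $z=u_{\alpha_2}$ propagates.
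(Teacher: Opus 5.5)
Your proof is correct and follows the paper's approach: non-strict monotonicity by comparison, then Theorem~\ref{thm:DM} applied to the amplitude family $\lambda\,\mathds{1}_{(-L_1/2,L_1/2)}$ (your $\min(\lambda,\alpha_1)$ is redundant, so this is exactly the paper's family), whose unique threshold must be $\alpha_1$, forcing propagation at $\alpha_2$. The paper phrases this as the contradiction $\alpha_1=\alpha_*(L)=\alpha_2$, and your exploratory detours (strong maximum principle, Gaussian tails, zero-number heuristics) are unnecessary and can simply be deleted.
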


\begin{proof}
We first note that the comparison principle implies that $\alpha\mapsto L^*(\alpha)$ is nonincreasing on $(\theta,+\infty)$. Indeed, if $\theta<\alpha_1<\alpha_2$ and $L>0$, then
\[
u_{\alpha_1}^L(0,x)\le u_{\alpha_2}^L(0,x)\ \text{ for all } x\in\R,
\]
hence
\[
u_{\alpha_1}^L(t,x)\le u_{\alpha_2}^L(t,x)\ \text{ for all } t>0, \  x\in\R.
\]
Therefore, thanks to Theorems~\ref{thm:Zla}-\ref{thm:DM}, if $u_{\alpha_1}^L$ propagates, then so does $u_{\alpha_2}^L$, which yields
\[
L^*(\alpha_2)\le L^*(\alpha_1).
\]

It remains to prove that this inequality is strict. Let $\theta<\alpha_1<\alpha_2$, and argue by contradiction. Assume that
\[
L^*(\alpha_1)=L^*(\alpha_2)=:L.
\]
For this fixed $L>0$, we  consider the one-parameter family $\{\phi_\alpha\}_{\alpha\ge0}$ defined by
\[
\phi_\alpha:=\alpha\,\mathds{1}_{(-L/2,L/2)},
\qquad \alpha\ge0.
\]
This family $\{\phi_\alpha\}_{\alpha\ge0}$ satisfies \eqref{hyp:DuMat}, and each function $\phi_\alpha$ is even. Applying Theorem~\ref{thm:DM} to this family, there exists a threshold value $\alpha_*(L)\in(0,+\infty]$ such that the corresponding solution $u_\alpha^L$ of~\eqref{eq:PDE} with initial datum $\phi_\alpha$ satisfies
\[
\lim_{t\to+\infty}u_\alpha^L(t,\cdot)=
\left\{\begin{array}{lll}
0 & \text{uniformly in } \R, & \text{if } \alpha<\alpha_*(L),\\[1mm]
p & \text{uniformly in } \R, & \text{if } \alpha=\alpha_*(L),\\[1mm]
1 & \text{locally uniformly in } \R, & \text{if } \alpha>\alpha_*(L).
\end{array}\right.
\]
On the other hand, owing to the definition of $L^*(\alpha_i)$ and our assumption $L=L^*(\alpha_i)$, the solution $u_{\alpha_i}^L$ converges to the ground state $p$ for both $i=1,2$.  This implies that $\alpha_*(L)$ is finite (otherwise
all finite values of $\alpha$ would lead to extinction) and 
\[
\alpha_1=\alpha_*(L)=\alpha_2,
\]
which contradicts $\alpha_1<\alpha_2$. Therefore one must have
\[
L^*(\alpha_2)<L^*(\alpha_1)
\quad \text{whenever } \theta<\alpha_1<\alpha_2.
\]
This proves that the map $\alpha\mapsto L^*(\alpha)$ is decreasing in $(\theta,+\infty)$.
\end{proof}

\begin{lemma}\label{lem:smallL1}
Assume that $f$ satisfies~\eqref{hyp:f1a}-\eqref{hyp:f1b}, and define
\[
\gamma:=\sup_{s>0}\frac{f(s)}{s}=\sup_{0<s<1}\frac{f(s)}{s}\in(0,+\infty),
\qquad
m_*:=\frac{\theta\sqrt{\pi}}{e^\gamma}.
\]
If $u_0\in L^1(\R)\cap L^\infty(\R)$, $u_0\ge0$, and $\|u_0\|_{L^1(\R)}\le m_*$, then the corresponding solution of~\eqref{eq:PDE} satisfies
\[
\lim_{t\to+\infty}u(t,\cdot)=0
\quad \text{uniformly in } \R.
\]
\end{lemma}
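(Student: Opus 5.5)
The plan is to compare $u$ with the solution of the linear equation $\partial_t w=\partial_{xx}w+\gamma w$, and then to use the fact that $f<0$ on $(0,\theta)$.

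\textbf{Linear supersolution.} By definition of $\gamma$, we have $f(s)\le\gamma s$ for all $s\ge0$. Here $\gamma$ is finite because $f$ is differentiable at $0$ and locally Lipschitz, and $f<0$ on $(1,+\infty)$ by~\eqref{hyp:f1b}. It is positive because $f>0$ on $(\theta,1)$. Consequently
\[
w(t,x)=e^{\gamma t}\int_\R G(t,x-y)\,u_0(y)\,dy,\qquad G(t,z)=\frac{e^{-z^2/(4t)}}{\sqrt{4\pi t}},
\]
is a supersolution of~\eqref{eq:PDE}. Since $u\ge0$ solves $\partial_t u=\partial_{xx}u+f(u)\le\partial_{xx}u+\gamma u$, the comparison principle gives $0\le u\le w$.

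\textbf{Pointwise bound at $t=1$.} At $t=1$ we obtain
\[
\|u(1,\cdot)\|_{L^\infty(\R)}\le e^\gamma\,\|G(1,\cdot)\|_{L^\infty(\R)}\,\|u_0\|_{L^1(\R)}=\frac{e^\gamma}{\sqrt{4\pi}}\,\|u_0\|_{L^1(\R)}\le \frac{e^\gamma}{2\sqrt\pi}\,m_*=\frac{\theta}{2}<\theta.
\]
So the normalization $m_*=\theta\sqrt\pi/e^\gamma$ leaves a factor $1/2$ of room.

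\textbf{Conclusion.} From time $t=1$ on, $u$ is bounded above by the solution $U$ of the ODE $U'=f(U)$ with $U(0)=\theta/2$. The constant $\theta/2$ is a supersolution, and the comparison principle applied to $u(1+\cdot,\cdot)$ and $U$ gives $u(1+t,x)\le U(t)$. Because $f<0$ on $(0,\theta)$ and $f(0)=0$, the function $U$ decreases to $0$; indeed $f'(0)<0$ even yields exponential decay. Hence $u(t,\cdot)\to0$ uniformly in $\R$.

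\textbf{Main difficulty.} There is no real obstacle here. The only point needing care is checking that $\gamma<+\infty$ and that the sup may be restricted to $(0,1)$. This uses $f(s)/s\to f'(0)$ as $s\to0^+$, boundedness of $f$ on compact subsets of $(0,1]$, and $f\le0$ outside $(\theta,1)$.
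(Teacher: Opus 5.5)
Your proposal is correct and follows essentially the same route as the paper's proof. Both compare $u$ with the linear supersolution $e^{\gamma t}G(t,\cdot)\ast u_0$ to get $u(1,\cdot)\le\theta/2$, then dominate $u$ from time $1$ on by the ODE solution $y'=f(y)$ with $y(1)=\theta/2$, which tends to $0$ because $f<0$ on $(0,\theta)$.
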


\begin{proof}
Since $f(s)\le \gamma s$ for all $s\ge0$, the comparison principle yields
\[
0\le u(1,x)\le e^\gamma\,(G(1,\cdot)\ast u_0)(x)
\le \frac{e^\gamma}{\sqrt{4\pi}}\|u_0\|_{L^1(\R)}
\le \frac\theta2
\ \text{ for all } x\in\R,
\]
where
\be\label{defG}
G(t,x):=\frac{e^{-x^2/(4t)}}{\sqrt{4\pi t}},
\quad t>0,\ x\in\R,
\ee
denotes the heat kernel on $\R$. Let $y$ be the solution of $y'(t)=f(y(t))$ with $y(1)=\theta/2$. Since $f<0$ on $(0,\theta)$ and $f(0)=0$, one has $y(t)\to 0$ as $t\to+\infty$. The comparison principle gives
\[
0\le u(t,x)\le y(t)
\ \text{ for all } t\ge1, \ x\in\R,
\]
and the conclusion follows.
\end{proof}

\begin{lemma}\label{lem:ODE-bound}
Assume that $f$ is locally Lipschitz-continuous in $[1,+\infty)$ and satisfies $f(1)=0$ together with~\eqref{hyp:f2} for some $p>1$ and $A>0$. Let $\alpha>1$ and let $U$ solve
$$\left\{\begin{array}{ll}
U'(t)=f(U(t)), & t>0,\vspace{3pt}\\
U(0)=\alpha. & \end{array}\right.$$
Then, for every $t>0$,
\be\label{defB}
U(t)\le B(t):=1+\big(A\,(p-1)\,t\big)^{-\frac1{p-1}}.
\ee
\end{lemma}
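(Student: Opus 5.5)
The plan is to compare $U$ with the explicit solution of the pure power ODE, working with $W(t):=U(t)-1$. First we check that $U$ stays above $1$. The constant $1$ solves $U'=f(U)$ because $f(1)=0$, and $f$ is locally Lipschitz, so the Cauchy problem has unique solutions. Since $U(0)=\alpha>1$, this gives $U(t)>1$ for all $t$ in the existence interval.

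Next we get monotonicity and global existence. Assumption \eqref{hyp:f2} gives $f(s)\le -A(s-1)^p<0$ for $s>1$, so $U$ is decreasing and satisfies $1<U(t)\le\alpha$. As $U$ is bounded, it exists for all $t\ge0$. Then $W=U-1>0$ satisfies the differential inequality
\[
W'(t)=f(U(t))\le -A\,W(t)^p,\qquad t>0.
\]

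The key step is to integrate this inequality through the change of variable $Z:=W^{1-p}$. This is legitimate because $W>0$ and $W$ is $C^1$. Since $p>1$, we get
\[
Z'(t)=(1-p)\,W^{-p}W'=(p-1)\,W^{-p}(-W')\ge (p-1)\,A.
\]
Integrating from $0$ to $t$ gives $Z(t)\ge Z(0)+A(p-1)t=(\alpha-1)^{1-p}+A(p-1)t$, and in particular $Z(t)\ge A(p-1)t$. Because $s\mapsto s^{-1/(p-1)}$ is decreasing on $(0,+\infty)$, this yields
\[
W(t)=Z(t)^{-\frac1{p-1}}\le \big(A(p-1)t\big)^{-\frac1{p-1}}\quad\hbox{for } t>0,
\]
which is exactly \eqref{defB}.

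There is no genuine obstacle here. The only point needing care is justifying that $W$ never reaches $0$, so that dividing by $W^p$ is allowed. This follows from uniqueness, which is why the local Lipschitz assumption on $[1,+\infty)$ is needed. An alternative to the substitution is to compare $U$ with the supersolution $B$ itself. But $B(0^+)=+\infty$, so that comparison would have to be started at a small $\varepsilon>0$ with $B(\varepsilon)>\alpha$, and the direct integration above is cleaner.
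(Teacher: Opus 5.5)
Your proof is correct and follows essentially the same route as the paper: set $V=U-1$, use \eqref{hyp:f2} to get $V'\le -A\,V^p$, integrate $(V^{1-p})'\ge A(p-1)$, and drop the positive term $(\alpha-1)^{1-p}$. You justify $U>1$ (by uniqueness) and global existence (by monotonicity and boundedness) more explicitly than the paper does, which states these facts briefly.
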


\begin{proof}
Set $V(t):=U(t)-1$. Since $U(0)>1$, $f(1)=0$, and $f$ is locally Lipschitz-continuous and nonpositive in $[1,+\infty)$, $U$ is defined in $[0,+\infty)$ and $V(t)>0$ for all $t>0$. Using~\eqref{hyp:f2},
\[
V'(t)=f(U(t))\le -A\,V(t)^p.
\]
Since $p>1$,
\[
\bigl(V(t)^{1-p}\bigr)'=-(p-1)V(t)^{-p}V'(t)\ge A\,(p-1).
\]
Integrating in time gives
\[
V(t)^{1-p}-V(0)^{1-p}\ge A\,(p-1)\,t
\]
for all $t>0$, whence
\[
U(t)=1+V(t)\le 1+\big(A\,(p-1)\,t\big)^{-\frac1{p-1}}.
\]
This provides the desired estimate~\eqref{defB}.
\end{proof}

\begin{proposition}\label{prop:L1-bound}
Assume that $f$ satisfies \eqref{hyp:f1a} and \eqref{hyp:f2} for some $p>1$ and $A>0$, and define $\gamma>0$ as in Lemma~$\ref{lem:smallL1}$. Then, for every $\alpha>1$ and $L>0$, the solution $u^L_\alpha$ of~\eqref{eq:PDE} with initial datum~\eqref{eq:IC-alpha-L} satisfies
\begin{equation}\label{eq:L1-bound}
\|u_\alpha^L(t,\cdot)\|_{L^1(\R)}
\le L\,B(t)+\frac{2e^{\gamma t}}{\sqrt{\pi}}\int_0^t \frac{B(s)}{\sqrt{t-s}}\,ds
\end{equation}
for all $t>0$, where $B$ is defined in~\eqref{defB} and the right-hand side of~\eqref{eq:L1-bound} is understood as~$+\infty$ if the integral diverges, that is, if $p\le2$.
\end{proposition}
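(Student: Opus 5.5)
We will prove \eqref{eq:L1-bound} by splitting the mass of $u:=u^L_\alpha$ into the part inside the initial interval $I:=(-L/2,L/2)$ and the part outside it. For the interior, we combine Lemma~\ref{lem:ODE-bound} with the comparison principle. Since $\|u(t,\cdot)\|_{L^\infty(\R)}\le\alpha$ initially and the spatially homogeneous solution $U$ of $U'=f(U)$, $U(0)=\alpha$, is a supersolution, we get $0\le u(t,x)\le U(t)\le B(t)$ for all $t>0$ and $x\in\R$. Hence $\int_I u(t,x)\,dx\le L\,B(t)$, which gives the first term.

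For the exterior, we control the mass in $\{x>L/2\}$; the region $\{x<-L/2\}$ is symmetric because $u$ is even. We use the linear supersolution: since $f(s)\le\gamma s$ for $s\ge0$ (and $f\le0\le\gamma s$ for $s>1$), the function $w:=e^{-\gamma t}u$ satisfies $w_t\le w_{xx}$. Next we fix the boundary data on the half-line $J:=(L/2,+\infty)$: $w(0,\cdot)=0$ on $J$, and $w(t,L/2)\le B(t)$ because $e^{-\gamma t}\le1$. By the comparison principle on the half-line (all functions bounded), $w\le z$ on $J$, where $z$ solves the heat equation on $J$ with zero initial datum and boundary value $z(t,L/2)=B(t)$.

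The main step is computing the flux. The mass $\int_J z(t,x)\,dx$ equals $-\int_0^t \partial_x z(s,L/2)\,ds$, and one could use the Dirichlet-to-Neumann representation. The simpler route is Duhamel's formula for the boundary-driven half-line heat problem:
\[
z(t,x)=\int_0^t \frac{x-L/2}{2\sqrt{\pi}(t-s)^{3/2}}\,e^{-(x-L/2)^2/(4(t-s))}\,B(s)\,ds .
\]
By Fubini, integrating in $x$ over $J$ gives
\[
\int_J z(t,x)\,dx=\int_0^t \frac{B(s)}{\sqrt{\pi}\sqrt{t-s}}\,ds ,
\]
using $\int_0^\infty \frac{y}{2\sqrt\pi\tau^{3/2}}e^{-y^2/(4\tau)}\,dy=\frac{1}{\sqrt{\pi\tau}}$. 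Since $B$ is decreasing and only integrable near $0$ when $p>2$, this representation is justified by monotone approximation: replace $B$ by $\min(B,\alpha)$. That truncation is still a valid bound because $u\le\alpha$, and we then pass to the limit by monotone convergence. Alternatively we keep the bound with $\min(B,\alpha)\le B$ directly.

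Combining the two half-lines, we multiply by $e^{\gamma t}$ and obtain
\[
\int_{\R\setminus I} u(t,x)\,dx\le \frac{2e^{\gamma t}}{\sqrt\pi}\int_0^t\frac{B(s)}{\sqrt{t-s}}\,ds .
\]
Adding the interior bound gives \eqref{eq:L1-bound}. When $p\le2$, $B(s)\sim s^{-1/(p-1)}$ is not integrable near $0$, so the right-hand side is $+\infty$ and there is nothing to prove. The main obstacle is the rigorous half-line comparison with time-dependent boundary data that may be unbounded as $t\to0$ (after truncation it is bounded) and discontinuous at the corner $(0,L/2)$. I would handle it by comparing on $[\varepsilon,t]$ and letting $\varepsilon\to0$.
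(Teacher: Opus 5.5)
Your proposal is correct and follows the paper's proof essentially step for step. The interior bound $L\,B(t)$ comes from the ODE supersolution, and the exterior bound comes from the linear comparison ($f(s)\le\gamma s$) on the half-line with the boundary kernel $\frac{x\,e^{-x^2/(4\tau)}}{2\sqrt{\pi}\,\tau^{3/2}}$, followed by Fubini. The only cosmetic difference is the order of operations: you factor out $e^{\gamma t}$ first and use the truncated boundary datum $\min(B,\alpha)$, whereas the paper keeps the bounded continuous datum $U(t)$ and bounds $e^{-\gamma s}U(s)\le B(s)$ only after integrating, which spares it the truncation step.
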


\begin{proof}
Let $U$ be the ODE solution from Lemma~\ref{lem:ODE-bound}. By the comparison principle and Lemma~\ref{lem:ODE-bound}, one has, for all $t>0$ and $x\in\R$,
\[
0\le u_\alpha^L(t,x)\le U(t)\le B(t).
\]
Therefore, for all $t>0$,
\[
\int_{-L/2}^{L/2}u_\alpha^L(t,x)\,dx\le L\,B(t).
\]

It remains to control the exterior region. Consider the linear problem on the half-line
\[
\begin{cases}
\partial_t w=\partial_{xx}w+\gamma w, & t>0,\ x>L/2,\\[2pt]
w(t,L/2)=U(t), & t>0,\\[2pt]
w(0,x)=0, & x>L/2.
\end{cases}
\]
Since $f(s)\le \gamma s$ for all $s\ge0$, the function $w$ is a supersolution of \eqref{eq:PDE} on $(L/2,+\infty)$, with the correct boundary and initial domination. Hence
\[
0\le u_\alpha^L(t,x)\le w(t,x)
\quad \text{for all } t>0,\ x>L/2.
\]

Setting $z(t,x):=w(t,x+L/2)$, one has
\[
\begin{cases}
\partial_t z=\partial_{xx}z+\gamma z, & t>0,\ x>0,\\[2pt]
z(t,0)=U(t), & t>0,\\[2pt]
z(0,x)=0, & x>0.
\end{cases}
\]
The standard representation formula on the half-line, see for instance \cite{Can84}, gives
\[
z(t,x)=e^{\gamma t}\int_0^t P(t-s,x)\,e^{-\gamma s}\,U(s)\,ds
\]
for all $t>0$ and $x>0$, where
\[
P(\tau,x)
= -2\frac{\partial}{\partial x}\left(\frac{e^{-x^2/(4\tau)}}{\sqrt{4\pi\tau}}\right)
= \frac{x\,e^{-x^2/(4\tau)}}{2\sqrt{\pi}\,\tau^{3/2}},
\quad \tau>0,\ \ x>0.
\]
Moreover, for all $\tau>0$,
\[
\int_0^{+\infty}P(\tau,x)\,dx
=\int_0^{+\infty}\frac{x\,e^{-x^2/(4\tau)}}{2\sqrt{\pi}\,\tau^{3/2}}\,dx
=\frac{1}{\sqrt{\pi\tau}}.
\]
Thus, for all $t>0$,
\begin{align*}
\int_{L/2}^{+\infty}u_\alpha^L(t,x)\,dx
\le \int_{L/2}^{+\infty}w(t,x)\,dx
& = \int_0^{+\infty}z(t,x)\,dx \\
&= e^{\gamma t}\int_0^{+\infty}\int_0^t P(t-s,x)\,e^{-\gamma s}\,U(s)\,ds\,dx \\
&= e^{\gamma t}\int_0^t e^{-\gamma s}\,U(s)
   \left(\int_0^{+\infty}P(t-s,x)\,dx\right)ds \\
&= \frac{e^{\gamma t}}{\sqrt{\pi}}\int_0^t \frac{e^{-\gamma s}\,U(s)}{\sqrt{t-s}}\,ds\\
&\le \frac{e^{\gamma t}}{\sqrt{\pi}}\int_0^t \frac{B(s)}{\sqrt{t-s}}\,ds.
\end{align*}
The same estimate holds on $(-\infty,-L/2)$ by symmetry. Combining the three regions, we obtain
\[
\|u_\alpha^L(t,\cdot)\|_{L^1(\R)}
\le L\,B(t)+\frac{2e^{\gamma t}}{\sqrt{\pi}}\int_0^t \frac{B(s)}{\sqrt{t-s}}\,ds,
\]
which is exactly \eqref{eq:L1-bound}.
\end{proof}


\section{Proofs of the main results}\label{sec3}

This section is devoted to the proof of the main results described in Section~\ref{sec1}.  We start with the rather straightforward Corollary~\ref{cor:limitL}.

\begin{proof}[Proof of Corollary~\ref{cor:limitL}]
Since $\alpha\mapsto L^*(\alpha)$ is decreasing (by Lemma~\ref{lem:alpha-monot}), if $0<L\le L_\infty$, then $L <  L^*(\alpha)$ for every $\alpha>\theta$, hence extinction follows from Theorems~\ref{thm:Zla}-\ref{thm:DM}, that is,~$u^L_\alpha(t,\cdot)\to0$ as $t\to+\infty$ uniformly in $\R$. This also holds for all $\alpha\in(0,\theta]$, by the parabolic maximum principle. Now, if $L>L_\infty$, then there exists $\alpha_0>\theta$ such that $L>L^*(\alpha)$ for all~$\alpha>\alpha_0$, and Theorems~\ref{thm:Zla}-\ref{thm:DM} entail propagation of the solution $u^L_\alpha$.
\end{proof}

In Subsection~\ref{sec31} we prove Theorem~\ref{thm:limitL}, using in particular Lemma~\ref{lem:smallL1} and Proposition~\ref{prop:L1-bound}. Then, Subsection~\ref{sec32} is concerned with the proof of Theorem~\ref{th:Linfty-psmall}, using a variety of refined subsolutions. In Subsection~\ref{sec33}, we carry out the proofs of Theorem~\ref{thm:frag-fav}, Corollary~\ref{cor:one-hole}  on the effect of fragmentation of the initial datum on the large-time dynamics. Lastly, in Subsection~\ref{ss:mass}, we prove the results on the failure of the mass concentration principle, namely Theorems~\ref{thm:frag-fav2} and~\ref{thm:frag-fav3}.


\subsection{Conditions ensuring $L_\infty>0$: proof of Theorem~\ref{thm:limitL}}\label{sec31}

\begin{proof}[Proof of Theorem~\ref{thm:limitL}]
We use Proposition~\ref{prop:L1-bound} together with the small-$L^1$ criterion from Lemma~\ref{lem:smallL1}.

\smallskip
\noindent
\textit{Case (i):} $p>3$ and arbitrary $A>0$. Fix $p>3$ and $A>0$. With $B$ as in~\eqref{defB}, it follows that, for all $t>0$,
\begin{align*}
    \int_0^t\frac{B(s)}{\sqrt{t-s}}\,ds &
=\
\int_0^t\frac{ds}{\sqrt{t-s}}
+\left(\frac{1}{A\,(p-1)}\right)^{\frac1{p-1}}
\int_0^t\frac{s^{-\frac1{p-1}}}{\sqrt{t-s}}\,ds \\
& =\  
2\sqrt{t}
+
\left(\frac{1}{A\,(p-1)}\right)^{\frac1{p-1}}
t^{\frac12-\frac1{p-1}}
\int_0^1\frac{r^{-\frac1{p-1}}}{\sqrt{1-r}}\,dr.
\end{align*}
Since $p>2$, one has $\frac1{p-1}<1$, and therefore
\[
\int_0^1\frac{r^{-\frac1{p-1}}}{\sqrt{1-r}}\,dr<+\infty.
\]
Since moreover $p>3$, the exponent $\frac12-\frac1{p-1}$ is positive, and therefore
\[
\int_0^t\frac{B(s)}{\sqrt{t-s}}\,ds\ \mathop{\longrightarrow}_{t\to0^+}\ 0.
\]
Choose $t_0>0$ small enough so that
\[
\frac{2\,e^{\gamma t_0}}{\sqrt{\pi}}\int_0^{t_0}\frac{B(s)}{\sqrt{t_0-s}}\,ds<\frac{m_*}{2},
\]
where $m_*$ is given by Lemma~\ref{lem:smallL1}. We may then choose $L_0>0$ such that
\[
L_0B(t_0)<\frac{m_*}{2}.
\]
By Proposition~\ref{prop:L1-bound}, every solution $u_\alpha^{L_0}$ with $\alpha>1$ then satisfies
\[
\|u_\alpha^{L_0}(t_0,\cdot)\|_{L^1(\R)}<m_*.
\]
Applying Lemma~\ref{lem:smallL1} with the new initial time $t_0$ yields extinction of $u_\alpha^{L_0}$ for every $\alpha>1$. Therefore
\[
L^*(\alpha)>L_0\ \text{ for all } \alpha>1,
\]
and thus $L_\infty\ge L_0>0$. 

\medskip
\noindent
\textit{Case (ii):} $p>2$ and large $A>0$. Let
\[
t_1:=\frac{1}{A\,(p-1)},
\quad \text{so that }B(t_1)=2.
\]
Applying Proposition~\ref{prop:L1-bound} with $L=2/A$ and any $\alpha>1$ gives
\[
\|u_\alpha^{2/A}(t_1,\cdot)\|_{L^1(\R)}
\le\frac4A+\frac{2\,e^{\gamma t_1}}{\sqrt{\pi}}\int_0^{t_1}\frac{B(s)}{\sqrt{t_1-s}}\,ds.
\]
Since
\[
B(s)=1+\left(\frac{t_1}{s}\right)^{\frac1{p-1}},
\]
a change of variables $s=t_1r$ yields
\[
\int_0^{t_1}\frac{B(s)}{\sqrt{t_1-s}}\,ds
=\sqrt{t_1}\underbrace{\int_0^1\frac{1+r^{-\frac1{p-1}}}{\sqrt{1-r}}\,dr}_{=:B_p}.
\]
Because $p>2$, the  integral $B_p$ is finite. Hence
\[
\|u_\alpha^{2/A}(t_1,\cdot)\|_{L^1(\R)}
\le \frac4A+\frac{2\,e^{\gamma t_1}B_p\,\sqrt{t_1}}{\sqrt{\pi}}.
\]
Using the definition of $t_1=\frac1{A\,(p-1)}$, we observe that the right-hand side tends to $0$ as~${A\to+\infty}$. We may therefore choose $A^*>0$, depending only on $\gamma$ and $p$, such that for every $A>A^*$,
\[
\frac4A+\frac{2\,e^{\gamma t_1}\,B_p\,\sqrt{t_1}}{\sqrt{\pi}}<m_*.
\]
For such an $A$, Lemma~\ref{lem:smallL1} implies that $u_\alpha^{2/A}$ extinguishes for every $\alpha>1$. Consequently,
\[
L^*(\alpha)>\frac2A\ \text{ for all } \alpha>1,
\]
and therefore $L_\infty\ge\frac2A>0$. The proof of Theorem~\ref{thm:limitL} is thereby complete.
\end{proof}


\subsection{Conditions ensuring $L_\infty=0$: proof of Theorem~\ref{th:Linfty-psmall}}\label{sec32}

We distinguish the proofs of case (i) and case (ii).

\begin{proof}[Proof of Theorem \ref{th:Linfty-psmall}-(i)] First,  if $L=\frac{m_*}{\alpha}$ in~\eqref{eq:IC-alpha-L} with $m_*>0$ as in Lemma~\ref{lem:smallL1}, then extinction occurs from the small-$L^1$ criterion of Lemma~\ref{lem:smallL1}. Hence
$$L^*(\alpha)>\frac{m_*}{\alpha},$$
for every $\alpha>\theta$. 

Next, since $f$ satisfies \eqref{hyp:f1a} and \eqref{hyp:f3}
 for some $0\leq p\leq 1$, we can define
$$
\displaystyle \rho:=-\inf_{s>0}\frac{f(s)}{s}\in(0,+\infty).
$$
We now fix any $R>L^*(1)$, for instance $R=L^*(1)+1$, and define
$$
M:=\sqrt{4\pi}\,e^{\rho+\frac{(R+1)^2}{16}}>1>\theta.
$$
We show below that the initial size $L=\frac{M}{\alpha}$ in~\eqref{eq:IC-alpha-L} leads to propagation as soon as $\alpha \geq M$.

So, with $\alpha\ge M$ and $L=\frac{M}{\alpha}\in(0,1]$, let $\underline u_\alpha^L$ be the solution of the linear Cauchy problem
\begin{equation}\label{eq:linear-sub}
\begin{cases}
\partial_t \underline u=\partial_{xx}\underline u-\rho\,\underline u,
& t>0,\ x\in\R,\\[2pt]
\underline u(0,x)=\alpha\,\mathds{1}_{(-L/2,L/2)}(x), & x\in\R.
\end{cases}
\end{equation}
Since $f(s)\ge -\rho s$ for all $s\ge0$, the function $\underline u_\alpha^L$ is a subsolution of \eqref{eq:PDE}, and hence
\be\label{inequ}
u_\alpha^L(t,x)\ge \underline u_\alpha^L(t,x)\ 
\text{ for all } t\ge0, \ x\in\R.
\ee
Moreover, for all $t>0$ and $x\in\R$, one has
\begin{equation}\label{eq:linear-sub-explicit}
\underline u_\alpha^L(t,x)
=\alpha\,e^{-\rho t}\int_{-L/2}^{L/2}G(t,x-y)\,dy,
\end{equation}
where $G$ denotes the heat kernel on $\R$, defined in~\eqref{defG}.

If $|x|\le R/2$ and $|y|\le L/2$, then $|x-y|\le (R+L)/2\le (R+1)/2$, hence
\[
G(1,x-y)\ge \frac{1}{\sqrt{4\pi}}\exp\left(-\frac{(R+1)^2}{16}\right).
\]
Therefore,
\[
\underline u_\alpha^L(1,x)
\ge \alpha\,e^{-\rho}\,\frac{L}{\sqrt{4\pi}}
\exp\left(-\frac{(R+1)^2}{16}\right)
\text{ for all } |x|\le R/2.
\]
From the above choices and~\eqref{inequ}, we obtain
\[
u_\alpha^L(1,x)\ge1
\text{ for all } |x|\le R/2.
\]
Hence,
\[
u_\alpha^L(1,\cdot)\ge u_1^R(0,\cdot).
\]
Since $R>L^*(1)$, Theorems~\ref{thm:Zla}-\ref{thm:DM} imply that the solution of~\eqref{eq:PDE} with initial datum $u_1^R(0,\cdot)$ propagates. The comparison principle then yields propagation of $u_\alpha^L$ with~$L=\frac{M}{\alpha}$, that is,
\[
L^*(\alpha)<\frac M\alpha\  \text{ for all } \alpha \geq M,
\]
which concludes the proof.
\end{proof}

\begin{proof}[Proof of Theorem \ref{th:Linfty-psmall}-(ii)]
Throughout the proof, we assume that $f$ satisfies~\eqref{hyp:f1a} and~\eqref{hyp:f3} with $1<p<3$. The problem under consideration shares some similarities with the following heat equation with absorption
$$
U_t=\Delta U-U^p, \quad t>0,\ x\in \R^N,
$$
which has attracted a lot of attention \cite{Bre-Pel-Ter-86,Esc-Kav-Mat-95,Gmi-Ver-84,Her-99,Kam-Pel-85,Kam-Pel-85-bis}. In dimension $N=1$, when looking at positive and even-in-$x$ self-similar solutions of the form
\begin{equation}\label{ansatz}
W(t,x):=\frac{1}{(t_0+t)^{\frac{1}{p-1}}}\varphi\left(\frac{x}{\sqrt{t_0+t}}\right), \quad t_0>0,\ t\geq 0,\ x\in \R,
\end{equation}
one reaches the ODE Cauchy problem for $\varphi=\varphi(z)$:
\be\label{pb-ode}
\left\{\baa{l}
\varphi''+\frac 12 z\varphi' +\frac{1}{p-1}\varphi=\varphi^p,\quad z\in \R,\vspace{3pt}\\
\varphi(0)=h,\vspace{3pt}\\
\varphi'(0)=0,\eaa\right.
\ee
where $h>0$ can be viewed as a shooting parameter. When $1<p<3$,  Brezis, Peletier and Terman \cite{Bre-Pel-Ter-86} exhibited a very singular solution to the heat equation with absorption which is based on the following tool.

\begin{lemma}[\cite{Bre-Pel-Ter-86}]\label{lem:bre-pel-ter} Assume $1<p<3$. Then there is a unique $h>0$ such that the (even) solution to \eqref{pb-ode} is global, and satisfies
$$
\varphi>0 \text{ on } \R, \quad \lim_{z\to +\infty} z^{\frac{2}{p-1}}\varphi(z)=0.
$$
Furthermore, $\varphi'<0$ on $(0,+\infty)$, and
\begin{equation}
    \label{asympto-varphi}
\varphi(z)\sim C e^{-\frac 1 4 z^2}z^{\frac{3-p}{p-1}}\ \text{ as } z\to +\infty,
\end{equation}
for some known constant $C>0$.  
\end{lemma}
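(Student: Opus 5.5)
This is the Brezis--Peletier--Terman shooting result for the very singular solution, and I would prove it by a shooting argument on the initial height $h$ in \eqref{pb-ode}. It helps first to write the equation in divergence-like form. Set $\alpha:=\frac1{p-1}$ and note that $\frac12(z\varphi)'=\frac12\varphi+\frac12 z\varphi'$. Then
\[
\varphi''+\tfrac12 z\varphi'+\alpha\varphi-\varphi^p=0
\]
becomes $\bigl(\varphi'+\tfrac12 z\varphi\bigr)'=\varphi^p-(\alpha-\tfrac12)\varphi$. Since $p<3$, we have $\alpha>\frac12$, and this sign is what eventually forces the dichotomy below.

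For each $h>0$, local existence and uniqueness give a solution $\varphi_h$, which is even by uniqueness. I would split the parameter range $(0,+\infty)$ into three sets:
\begin{itemize}
\item $\mathcal{A}$, the set of $h$ for which $\varphi_h$ vanishes at some $z_0>0$;
\item $\mathcal{B}$, the set of $h$ for which $\varphi_h>0$ on $[0,+\infty)$ and $\liminf_{z\to+\infty}z^{2\alpha}\varphi_h(z)>0$;
\item the remaining set, where $\varphi_h>0$ and $z^{2\alpha}\varphi_h\to0$.
\end{itemize}

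\textbf{Step 1: the two open sets are nonempty.}
\begin{itemize}
\item For small $h$, linearize around $0$. The linear equation $\psi''+\frac12 z\psi'+\alpha\psi=0$ with $\alpha>\frac12$ has its even solution oscillating, with a zero at finite $z$. Hermite-type analysis shows this: the solutions without zeros are exactly those with $\alpha\le\frac12$, and $\alpha=\frac12$ corresponds to the Gaussian. By continuity of the rescaled problem, $\varphi_h$ then has a zero for $h$ small, so $\mathcal{A}\ni h$.
\item For large $h$, rescale $\varphi=h\,\psi(h^{(p-1)/2}z)$. The limit problem is $\psi''=\psi^p$, whose solution grows. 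I would argue that $\varphi_h'$ stays close to $0$ long enough, and that afterwards the quantity $\varphi'+\frac12 z\varphi$ stays positive. This yields $\varphi_h\ge c\,z^{-2\alpha}$, the algebraic decay of the slowly decaying family, so $h\in\mathcal{B}$.
\end{itemize}

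\textbf{Step 2: openness.}
\begin{itemize}
\item $\mathcal{A}$ is open because the zero is transversal, i.e.\ $\varphi'(z_0)\neq0$.
\item For $\mathcal{B}$, I would first show $\varphi_h'<0$ on $(0,\infty)$ for every positive solution. At any local minimum one has $\varphi''\ge0$, hence $\varphi^p\ge\alpha\varphi$. From that point on $\varphi$ stays above $\alpha^{\alpha}$ and cannot decay, which contradicts the possible behaviours at infinity.
\item With monotonicity in hand, the asymptotic analysis at infinity gives the following classification: every positive solution satisfies either $z^{2\alpha}\varphi\to L>0$, or the fast Gaussian decay $\varphi\sim Ce^{-z^2/4}z^{2\alpha-1}$. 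Note that $2\alpha-1=\frac{3-p}{p-1}$. The slow branch is stable under perturbation, which I would establish with a Lyapunov or energy argument applied to $\varphi'+\frac12 z\varphi$. That gives openness of $\mathcal{B}$.
\end{itemize}

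\textbf{Step 3: existence and asymptotics.} By connectedness, some $h$ lies in neither open set. For that $h$, $\varphi_h$ is positive, decreasing, and not slowly decaying, so it must be fast-decaying. The WKB-type asymptotics of the linear part at infinity then give \eqref{asympto-varphi}, since the term $\varphi^p$ is negligible there.

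\textbf{Step 4: uniqueness, the main obstacle.} Suppose $\varphi_1,\varphi_2$ are two fast-decaying solutions with $h_1<h_2$. Then $\varphi_1<\varphi_2$ near $0$.
\begin{itemize}
\item First try: show the graphs cannot cross. If they met first at some $z_1$, compare them through the scaling family $\varphi_\lambda(z)=\lambda^{2\alpha}\varphi(\lambda z)$. This family solves a perturbed equation, and the perturbation has a definite sign.
\item Alternatively, use the Wronskian-type identity
\[
\Bigl[e^{z^2/4}(\varphi_1'\varphi_2-\varphi_1\varphi_2')\Bigr]'=e^{z^2/4}\varphi_1\varphi_2\bigl(\varphi_1^{p-1}-\varphi_2^{p-1}\bigr).
\]
If $\varphi_1<\varphi_2$ on all of $[0,\infty)$, the right side is negative throughout. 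The bracket vanishes at $z=0$ (because $\varphi_i'(0)=0$) and at $+\infty$ (by the fast decay), so integrating gives a contradiction. If instead the graphs intersect, apply the same identity on the interval $(0,z_1)$ up to the first intersection point. There the boundary term has the wrong sign, which again gives a contradiction.
\end{itemize}
I expect the delicate part to be controlling the boundary term at infinity, which relies on the Gaussian asymptotics.
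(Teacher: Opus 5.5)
\textbf{Verdict: the existence part has a genuine gap; the uniqueness part is sound.} The paper does not prove this lemma; it quotes it from Brezis--Peletier--Terman. So your outline has to stand on its own.

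\textbf{The gap.} Write $\alpha=\frac1{p-1}$ as you do. You have overlooked the constant solution $\varphi\equiv\alpha^{\alpha}=(p-1)^{-1/(p-1)}$. For $h>\alpha^\alpha$ one has $\varphi_h''(0)=h^p-\alpha h>0$, and $(e^{z^2/4}\varphi_h')'=e^{z^2/4}\varphi_h(\varphi_h^{p-1}-\alpha)>0$ as long as $\varphi_h\ge h$. Hence $\varphi_h$ increases on its whole existence interval. For large $h$ it blows up at some $z$ of order $h^{-(p-1)/2}$, which is exactly what your limit problem $\psi''=\psi^p$, $\psi(0)=1$, $\psi'(0)=0$ predicts. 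This has several consequences.
\begin{itemize}
\item Large $h$ do not belong to $\mathcal B$, so nothing in your argument shows $\mathcal B\neq\emptyset$.
\item Your three sets do not exhaust $(0,+\infty)$: blow-up values of $h$ lie in none of them. So a point outside $\mathcal A\cup\mathcal B$ need not give a decaying solution at all.
\item Your criterion is also wrong. $\varphi'+\tfrac12 z\varphi>0$ only says that $e^{z^2/4}\varphi$ increases, a Gaussian lower bound. The fast solution itself satisfies $\varphi'+\tfrac12 z\varphi=\int_z^{+\infty}\bigl((\alpha-\tfrac12)\varphi-\varphi^p\bigr)>0$ for large $z$. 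Slow versus fast decay is decided by $\lim z^{2\alpha}\varphi$, and that quantity, not $\varphi'+\tfrac12 z\varphi$, is what you should use for openness of $\mathcal B$.
\item The claim ``$\varphi_h'<0$ for every positive solution'' fails for every $h>\alpha^\alpha$.
\end{itemize}

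\textbf{What the existence proof needs.} The shooting must be done on $(0,\alpha^\alpha)$. There monotonicity is immediate: a first critical point $z_1$ would force $\varphi(z_1)\ge\alpha^\alpha>h$.

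Your Wronskian argument is correct and does real work here. Two positive solutions with $h_1<h_2$ never cross. In the non-crossing case the boundary term at infinity vanishes using only $\varphi_i=O(z^{2\alpha-1}e^{-z^2/4})$ and $\varphi_i'=O(z^{2\alpha}e^{-z^2/4})$; the latter follows by integrating $(e^{z^2/4}\varphi')'=e^{z^2/4}(\varphi^p-\alpha\varphi)$ from $0$. It follows that on $(0,\alpha^\alpha)$ the set $\mathcal A$ is an interval $(0,h_1)$, $\mathcal B$ is an interval ending at $\alpha^\alpha$, and the fast set has at most one point.

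Existence therefore reduces to exactly the step you are missing: some $h<\alpha^\alpha$, necessarily close to $\alpha^\alpha$, yields a solution that never vanishes. Without it, $\mathcal A=(0,\alpha^\alpha)$ is not excluded. Enlarging $\mathcal B$ to include the increasing solutions does not help, since $h=\alpha^\alpha$ would then lie outside both open sets.

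Heuristically, as $h\uparrow\alpha^\alpha$:
\begin{itemize}
\item the solution stays near the constant up to $z$ of order $(\alpha^\alpha-h)^{-1/2}$, since the linearization there is $\eta''+\tfrac12 z\eta'-\eta=0$, with even solution $1+z^2/2$;
\item beyond that range $\varphi''$ becomes negligible and $\varphi$ tracks the reduced equation $\tfrac12 z\varphi'=\varphi^p-\alpha\varphi$;
\item the solutions of the reduced equation are $((p-1)+Kz^2)^{-1/(p-1)}$, which stay positive and decay like $z^{-2\alpha}$.
\end{itemize}
A rigorous version of this is the substantive part of the existence proof, and it is absent from your proposal. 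An alternative substitute would be to build the self-similar solutions with initial data $L|x|^{-2\alpha}$ at the PDE level.

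\textbf{Minor point.} You can drop the scaling-family ``first try'' for uniqueness. Its perturbation term $\tfrac12(\lambda^{2\alpha}-\lambda^{2\alpha+2})(z\varphi'+2\alpha\varphi)(\lambda z)$ has no definite sign along the fast solution, so it would not work as stated.
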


We denote by $\varphi$ the solution provided by Lemma \ref{lem:bre-pel-ter} and recall $h=\varphi(0)>0$. We fix
\begin{equation}
    \label{varepsilon}
    0<\varepsilon<\frac{1}{p(p-1)}.
\end{equation}
Let us denote
$$
R:=L^*(1)+1,
$$
and
$$
M:=-\min _{s\in[0,1]}f(s)\in (0,+\infty).
$$
We emphasize that $R$ and $M$ depend only on $f|_{[0,1]}$, and are thus independent of $a>0$ appearing in \eqref{hyp:f3}. We fix $T>0$ large enough so that
 \begin{equation}\label{T-large}
 \varphi\left(\frac{R}{2\sqrt{T}}\right)\geq \frac {h}2.
\end{equation}
We now fix $a_*>0$ small enough so that 
\begin{equation}
    \label{a-etoile-1}
    \forall\,0<a<a_*,\ \ \ \frac{1}{a^{\varepsilon p}}\geq M,
\end{equation}
together with
 \begin{equation}
     \label{a-etoile-2}
 \forall\,0<a<a_*,\ \ \ \frac{h}{2\,a^{\frac{1}{p-1}}(1+T)^{\frac{1}{p-1}}}-\frac{1}{a^{\frac 1 p +\varepsilon}}\geq 1.
 \end{equation}
This is possible because, by \eqref{varepsilon}, the first term in \eqref{a-etoile-2} dominates the second one as $a\to0^+$.

From now on, we assume that $f$ satisfies \eqref{hyp:f3} with
$$0<a<a_*.$$
Let us fix $L>0$. Our goal is to show that the solution $u_\alpha^L$ starting from
\[
u_\alpha^L(0,x)=\alpha\,\mathds{1}_{(-L/2,L/2)}(x)
\]
propagates for $\alpha$ large enough. From the definition of $M>0$, \eqref{hyp:f3}, and \eqref{a-etoile-1}, one can check that 
\begin{equation}\label{ineg-f-a}
    \forall\,s\geq 0,\ \ \ f(s)\geq -a\left(s+ \frac{1}{a^{\frac{1}{p}+\varepsilon}} \right)^p.
\end{equation}
Indeed, if $0\leq s\leq 1$, then
\[
    f(s)\geq -M\geq -a^{-\varepsilon p}
    =
    -a\left(\frac{1}{a^{\frac1p+\varepsilon}}\right)^p
    \geq
    -a\left(s+\frac{1}{a^{\frac1p+\varepsilon}}\right)^p,
\]
while, if $s>1$, then \eqref{hyp:f3} gives
\[
    f(s)\geq -as^p
    \geq
    -a\left(s+\frac{1}{a^{\frac1p+\varepsilon}}\right)^p.
\]
By comparison,
\[
u_\alpha^L\geq v-\frac{1}{a^{\frac{1}{p}+\varepsilon}}
\]
in $(0,+\infty)\times\R$, where 
$$
v_t=v_{xx}-a v^p, \quad
v_0=\alpha\,\mathds{1}_{(-L/2,L/2)}+\frac{1}{a^{\frac{1}{p}+\varepsilon}}.
$$
Indeed, $u_\alpha^L+a^{-\frac1p-\varepsilon}$ is a supersolution of the equation solved by $v$, thanks to \eqref{ineg-f-a}.

Next, since $W$ defined in~\eqref{ansatz}-\eqref{pb-ode} is a solution of $W_t=W_{xx}-W^p,$
one immediately checks, by scaling the amplitude, that
\[
w(t,x):=\frac{1}{a^{\frac{1}{p-1}}}W(t,x)
=
\frac{1}{a^{\frac{1}{p-1}}(t_0+t)^{\frac{1}{p-1}}}\,
\varphi\left(\frac{x}{\sqrt{t_0+t}}\right)
\]
satisfies
\[
w_t=w_{xx}-aw^p.
\]
Indeed, if $\beta=1/(p-1)$, then
\[
w_t-w_{xx}
=
a^{-\beta}(W_t-W_{xx})
=
-a^{-\beta}W^p
=
-aw^p.
\]

For $w(0,\cdot)\leq v_0$ to hold, it is sufficient, recalling that $\varphi$ is radially decreasing, to have
\begin{equation}
    \label{cond1}
    \frac{h}{a^{\frac{1}{p-1}}t_0^{\frac{1}{p-1}}}\leq \alpha+\frac{1}{a^{\frac{1}{p}+\varepsilon}}
\end{equation}
and 
\begin{equation}
    \label{cond2}
    \frac{1}{a^{\frac{1}{p-1}}t_0^{\frac{1}{p-1}}}\,\varphi\left(\frac{L}{2\sqrt{t_0}}\right)\leq \frac{1}{a^{\frac{1}{p}+\varepsilon}}.
\end{equation}
Indeed, \eqref{cond1} gives the comparison inside $(-L/2,L/2)$, while \eqref{cond2} gives it outside $(-L/2,L/2)$ by monotonicity of $\varphi$. From \eqref{asympto-varphi}, we can choose $0<t_0<1$ small enough so that \eqref{cond2} holds, and then $\alpha>0$ large enough so that \eqref{cond1} holds. For these choices, we deduce from the comparison principle that
$$
u_\alpha^L\geq v-\frac{1}{a^{\frac{1}{p}+\varepsilon}}\geq w-\frac{1}{a^{\frac{1}{p}+\varepsilon}}
$$
in $(0,+\infty)\times\R$. In particular, since $\varphi$ is radially decreasing, we have, for any $|x|\leq \frac R 2$,
\begin{eqnarray*}
    u_\alpha^L(T,x)&\geq &\frac{1}{a^{\frac{1}{p-1}}(t_0+T)^{\frac{1}{p-1}}}\,\varphi\left(\frac{R}{2\sqrt{t_0+T}}\right)-\frac{1}{a^{\frac 1 p +\varepsilon}}
    \\
    &\geq & \frac{1}{a^{\frac{1}{p-1}}(1+T)^{\frac{1}{p-1}}}\,\varphi\left(\frac{R}{2\sqrt{T}}\right)-\frac{1}{a^{\frac 1 p +\varepsilon}}\\
    &\geq & \frac{1}{a^{\frac{1}{p-1}}(1+T)^{\frac{1}{p-1}}}\times\frac h 2-\frac{1}{a^{\frac 1 p +\varepsilon}}\geq 1,
\end{eqnarray*}
where we have used \eqref{T-large} and \eqref{a-etoile-2}. Thus, at time $T$, the solution $u_\alpha^L$ is above $1$ on the interval $(-R/2,R/2)$. Since $R>L^*(1)$, the solution starting from $\mathds{1}_{(-R/2,R/2)}$ propagates. By comparison, $u_\alpha^L$ propagates as well. Since $L>0$ was arbitrary, this proves that $L_\infty=0$.
\end{proof}


\subsection{Fragmentation results: proofs of Theorem~\ref{thm:frag-fav} and Corollary~\ref{cor:one-hole}}\label{sec33}

We begin with the following useful convolution estimate.

\begin{lemma}\label{lem:osc}
Let $K\in W^{1,1}(\R)$, $h>0$, and~$\{J_k\}_{k\in\mathbb{Z}}$ be a family of pairwise disjoint intervals of length $h$ such that $\R=\bigcup_{k\in\mathbb{Z}}\overline{J_k}$. Assume that $q\in L^\infty(\R)$ satisfies
\[
\int_{J_k} q(y)\,dy=0
\quad \text{for every } k\in\mathbb{Z}.
\]
Then
\[
\|K\ast q\|_{L^\infty(\R)}\le h\,\|q\|_{L^\infty(\R)}\,\|K'\|_{L^1(\R)}.
\]
\end{lemma}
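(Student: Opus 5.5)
Fix $x\in\R$ and write $(K\ast q)(x)=\sum_{k\in\mathbb{Z}}\int_{J_k}K(x-y)\,q(y)\,dy$. This splitting is legitimate because $K\in L^1(\R)$, $q\in L^\infty(\R)$, and the $J_k$ cover $\R$ up to a null set. On each $J_k$, the zero-mean condition $\int_{J_k}q=0$ lets us subtract any constant from $K(x-\cdot)$. We choose the constant $K(x-c_k)$, where $c_k$ is a point of $\overline{J_k}$; for instance, take the left endpoint. This gives
\[
\int_{J_k}K(x-y)\,q(y)\,dy=\int_{J_k}\bigl(K(x-y)-K(x-c_k)\bigr)\,q(y)\,dy .
\]
Since $K\in W^{1,1}(\R)$, it has an absolutely continuous representative, which we use throughout. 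For $y\in J_k$ we then have
\[
|K(x-y)-K(x-c_k)|\le\int_{J_k}|K'(x-z)|\,dz,
\]
because both $x-y$ and $x-c_k$ lie in the interval $x-\overline{J_k}$.

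Consequently
\[
\Bigl|\int_{J_k}K(x-y)\,q(y)\,dy\Bigr|\le \|q\|_{L^\infty(\R)}\,|J_k|\int_{J_k}|K'(x-z)|\,dz = h\,\|q\|_{L^\infty(\R)}\int_{J_k}|K'(x-z)|\,dz .
\]
Summing over $k$ and using that the $J_k$ are disjoint and cover $\R$ up to a null set, we get $\sum_k\int_{J_k}|K'(x-z)|\,dz=\|K'\|_{L^1(\R)}$. Hence $|(K\ast q)(x)|\le h\,\|q\|_{L^\infty(\R)}\,\|K'\|_{L^1(\R)}$ for every $x$, which is the claim.

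The only technical points are justifying the term-by-term decomposition and using the continuous representative of $K$. Both are routine. The sum $\sum_k\int_{J_k}|K(x-y)||q(y)|\,dy\le\|K\|_{L^1}\|q\|_{L^\infty}$ converges absolutely, so the decomposition and the rearrangement are valid. The fundamental theorem of calculus holds for $W^{1,1}$ functions on $\R$. No step appears to be a real obstacle.
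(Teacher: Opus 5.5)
Your proof is correct and follows the paper's argument. You split $K\ast q$ over the cells $J_k$, use the zero mean of $q$ on each cell to subtract a constant from $K(x-\cdot)$, bound the oscillation of $K$ over $x-\overline{J_k}$ by $\int_{J_k}|K'(x-z)|\,dz$, and sum over $k$. The only difference is that the paper subtracts the cell average $\frac1h\int_{J_k}K(x-z)\,dz$ instead of the endpoint value $K(x-c_k)$; this avoids evaluating $K$ at a specific point but gives the same bound, and your use of the absolutely continuous representative makes your choice equally rigorous.
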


\begin{proof}
Fix $x\in\R$. For each $k\in\mathbb{Z}$, set
\[
M_k(x):=\frac1h\int_{J_k}K(x-z)\,dz.
\]
Since $\displaystyle\int_{J_k}q(y)\,dy=0$, there holds
\[
\int_{J_k}K(x-y)\,q(y)\,dy
=\int_{J_k}\bigl(K(x-y)-M_k(x)\bigr)\,q(y)\,dy.
\]
Now, for all $y\in J_k$,\begin{align*}
|K(x-y)-M_k(x)|
& = \frac1h\left| \int_{J_k}(K(x-y)-K(x-z))\,dz\right|\\
&\le \frac1h\int_{J_k}|K(x-y)-K(x-z)|\,dz\\
&\le \frac1h\int_{J_k}\int_{J_k}|K'(x-\xi)|\,d\xi\,dz
=\int_{J_k}|K'(x-\xi)|\,d\xi.
\end{align*}
Hence,
\begin{align*}
    \left|\int_{J_k}K(x-y)\,q(y)\,dy\right|
 & \le  \|q\|_{L^\infty(\R)} \int_{J_k} |K(x-y)-M_k(x)| \, dy \\
 & \le h\,\|q\|_{L^\infty(\R)}\int_{J_k}|K'(x-\xi)|\,d\xi.
\end{align*}
Summing over $k$ gives
\[
|(K\ast q)(x)|\le h\,\|q\|_{L^\infty(\R)}\int_\R |K'(x-\xi)|\,d\xi
= h\,\|q\|_{L^\infty(\R)}\,\|K'\|_{L^1(\R)}.
\]
The conclusion follows.
\end{proof}

We are now in the position to prove Theorem~\ref{thm:frag-fav}.

\begin{proof}[Proof of Theorem~\ref{thm:frag-fav}]
Choose any real numbers $R_1$ and $R_2$ such that
\[
0<R_1<L_\infty<R_2.
\]
Since $L^*(\eta)\to L_\infty$ as $\eta\to+\infty$, there exists $\beta>\theta$ such that
\[
0<L^*(\beta)<R_2.
\]
Set
\be\label{def:alpha}
\alpha:=\frac{(\beta+1)R_2}{R_1}>\beta+1>1,
\qquad
I:=\left(-\frac{R_2}{2},\frac{R_2}{2}\right),
\qquad
\bar u_0:=(\beta+1)\,\mathds{1}_I.
\ee
Since $R_1<L_\infty< L^*(\alpha)$, the solution $u^{R_1}_\alpha$ of~\eqref{eq:PDE} with initial datum
\[
u^{R_1}_\alpha(0,\cdot)=\alpha\,\mathds{1}_{(-R_1/2,R_1/2)}
\]
extinguishes.

It remains to prove propagation for a sufficiently fine fragmentation of the same mass. Fix $\delta>0$ such that
\[
R_2-2\delta>L^*(\beta)>0,
\]
and set
\[
I_\delta:=\left(-\frac{R_2}{2}+\delta,\frac{R_2}{2}-\delta\right)\quad\hbox{and}\quad
M_\alpha:=\max_{s\in[0,\alpha]}|f(s)|.
\]
Let $G$ denote the heat kernel on $\R$, given as in~\eqref{defG}. Since $I_\delta\Subset I$ and thus
$$G(t,\cdot)\ast \bar u_0=(\beta+1)\,G(t,\cdot)\ast\mathds{1}_I\to\beta+1\ \hbox{ uniformly in $I_\delta$ as $t\to0^+$},$$
we may choose $t_0>0$ such that
\begin{equation}\label{eq:bar-u0-close}
(G(t_0,\cdot)\ast \bar u_0)(x)\ge \beta+\frac12\ 
\text{ for all } x\in I_\delta,
\end{equation}
and simultaneously
\begin{equation}\label{eq:choose-t0}
    t_0M_\alpha\le \frac14.
\end{equation}

For $n\in\N\setminus\{0\}$, let
\[
 u_{0,n}:=\alpha\,\mathds{1}_{E_n(R_1,R_2)},
\]
with $E_n(R_1,R_2)$ given in~\eqref{eq:En}. We recall that the Lebesgue measure of $E_n(R_1,R_2)$ is equal to $R_1$, that is, the length of the interval $(-R_1/2,R_1/2)$. In other words, the functions~$u^{R_1}_\alpha(0,\cdot)$ and $u_{0,n}$ have the same distribution function. On each cell
\[
J_{n,k}:=\left(-\frac{R_2}{2}+\frac{kR_2}{n},\ -\frac{R_2}{2}+\frac{(k+1)R_2}{n}\right),\quad k\in\mathbb{Z},
\]
one has
\[
\int_{J_{n,k}}\bigl(u_{0,n}(y)-\bar u_0(y)\bigr)\,dy
=\alpha\,\frac{R_1}{n}-(\beta+1)\,\frac{R_2}{n}=0
\]
for $k=0,\dots,n-1$. Furthermore, if $k\in\mathbb{Z}\setminus\{0,1,\cdots,n-1\}$, then $u_{0,n}=\bar u_0=0$ in $J_{n,k}$, whence $\int_{J_{n,k}}\bigl(u_{0,n}(y)-\bar u_0(y)\bigr)\,dy=0$. Applying Lemma~\ref{lem:osc} with $K=G(t_0,\cdot)$, $h=R_2/n$, the intervals $\{J_{n,k}\}_{k\in\mathbb{Z}}$, and $q_n:=u_{0,n}-\bar u_0$, we get
\[
\|G(t_0,\cdot)\ast q_n\|_{L^\infty(\R)}
\le \frac{R_2}{n}\,\alpha\,\|\partial_x G(t_0,\cdot)\|_{L^1(\R)}.
\]
Since $\|\partial_x G(t_0,\cdot)\|_{L^1(\R)}=1/\sqrt{\pi t_0}$ and $G(t_0,\cdot)\ast q_n$ is actually continuous in $\R$, we can fix an integer $n\ge2$ large enough such that
\begin{equation}\label{eq:osc-error}
\forall\,x\in\R,\quad|(G(t_0,\cdot)\ast q_n)(x)|\le \frac14.
\end{equation}

Let  $u_n$ be the solution of \eqref{eq:PDE} with initial datum $u_{0,n}$. Since $0\le u_{0,n}\le \alpha$ (with $\alpha>1$) and $f(s)<0$ for $s>1$, the constant function $\alpha$ is a supersolution. Hence
\[
0\le u_n(t,x)\le \alpha
\quad \text{for all } t\ge0,\ x\in\R.
\]
In particular,
\[
f(u_n(t,x))\ge -M_\alpha
\quad \text{for all } t\ge0,\ x\in\R.
\]
Finally, let $\underline u_n$ be the solution of
\[
\begin{cases}
\partial_t \underline{u}_n=\partial_{xx}\underline{u}_n-M_\alpha,
& t>0,\ x\in\R,\\[2pt]
\underline{u}_n(0,x)=u_{0,n}(x), & x\in\R.
\end{cases}
\]
By the comparison principle,
\[
u_n(t,x)\ge \underline u_n(t,x)
\quad \text{for all } t\ge0,\ x\in\R.
\]
Moreover,
\[
\underline u_n(t,x)=(G(t,\cdot)\ast u_{0,n})(x)-tM_\alpha\quad \text{for all } t>0,\ x\in\R,
\]
whence
\begin{align*}
    u_n(t_0,x) & \ge \underline{u}_n(t_0,x)\\
    & = (G(t_0,\cdot)\ast u_{0,n})(x)-t_0M_\alpha \\
    & =  (G(t_0,\cdot)\ast \bar u_0)(x)  + (G(t_0,\cdot)\ast q_n)(x)   -t_0M_\alpha
\end{align*}
for all $x\in\R$. Using \eqref{eq:bar-u0-close}, \eqref{eq:choose-t0}, and \eqref{eq:osc-error}, we infer that
\[
u_n(t_0,x)
\ge \left(\beta+\frac12\right)-\frac14-\frac14=\beta
\quad \text{for all } x\in I_\delta.
\]
Hence
\[
u_n(t_0,\cdot)\ge \beta\,\mathds{1}_{I_\delta}.
\]
Since $I_\delta$ is an interval of length $|I_\delta|=R_2-2\delta>L^*(\beta)$, Theorems~\ref{thm:Zla}-\ref{thm:DM} imply that the solution with initial datum $\beta\,\mathds{1}_{I_\delta}$ propagates. By comparison, $u_n$ then propagates as well. Remembering that $u_n$ solves~\eqref{eq:PDE} with initial datum $\alpha\,\mathds{1}_{E_n(R_1,R_2)}$, and that~$u^{R_1}_\alpha$ solves~\eqref{eq:PDE} with the equally distributed initial datum $\alpha\,\mathds{1}_{(-R_1/2,R_1/2)}$ and extinguishes, the proof of Theorem~\ref{thm:frag-fav} is thereby complete.
\end{proof}

\begin{proof}[Proof of Corollary~\ref{cor:one-hole}]
Let $f$, $0<R_1<L_\infty<R_2$, $\alpha>\theta$ and the integer $n\ge2$ be given as in Theorem~\ref{thm:frag-fav}.  
For $j=0,\dots,n-1$, set
\[
c_j:=-\frac{R_2}{2}+\frac{jR_2}{n},
\qquad
d_j:=c_j+\frac{R_1}{n}.
\]
For each $k=0,\dots,n-1$, define
\[
F^{(k)}
:=
\left(\bigcup_{j=0}^{k-1}(c_j,d_j)\right)
\cup
\left(c_k,\ c_k+\frac{(n-k)R_1}{n}\right),
\]
with the convention that the first union is empty when $k=0$.

We note that for every $k=0,\dots,n-1$,
$$|F^{(k)}|=k\frac{R_1}{n}+\frac{(n-k)R_1}{n}=R_1.$$
Moreover,
\[
F^{(0)}=\left(-\frac{R_2}{2},-\frac{R_2}{2}+R_1\right),
\]
which is an interval of length $R_1$, and
\[
F^{(n-1)}=\bigcup_{j=0}^{n-1}(c_j,d_j)=E_n(R_1,R_2).
\]
Additionally, for $k=0,\dots,n-2$, $F^{(k+1)}$ is obtained from $F^{(k)}$ by splitting the connected component $\big(c_k,\,c_k+\frac{(n-k)R_1}{n}\big)$ into two disjoint components $(c_k,d_k)$ and $\big(c_{k+1},\,c_{k+1}+\frac{(n-k-1)R_1}{n}\big)$.

\medskip

By Theorem~\ref{thm:frag-fav}, the solution of~\eqref{eq:PDE} with initial datum $\alpha\,\mathds{1}_{F^{(n-1)}}$ propagates. On the other hand, since $F^{(0)}$ is an interval of length $R_1<L_\infty$, the solution of~\eqref{eq:PDE} with initial datum $\alpha\,\mathds{1}_{F^{(0)}}$ extinguishes.

Consider now the finite set of indices
\[
\mathcal K:=\big\{k\in\{0,\dots,n-1\}:\ \text{the solution of~\eqref{eq:PDE} starting from }\alpha\,\mathds{1}_{F^{(k)}}\text{ does not extinguish}\big\}.
\]
This set is nonempty because $n-1\in\mathcal K$. Let $k_*$ be its smallest element. Since the solution with initial datum  $\alpha\,\mathds{1}_{F^{(0)}}$ extinguishes, one has $k_*\ge1$. Set
\[
\mathcal{E}:=F^{(k_*-1)},
\qquad
\mathcal{F}:=F^{(k_*)}.
\]
Then $|\mathcal{E}|=|\mathcal{F}|=R_1$, and the set $\mathcal{F}$ is obtained from $\mathcal{E}$ by splitting one connected component of $\mathcal{E}$ into two disjoint components. These sets are of the type described in the statement of Corollary~\ref{cor:one-hole}, with $k:=k_*$, $a_i:=c_{i-1}$, $b_i:=d_{i-1}$ for $i=1,\dots,k-1$, $a_k:=c_{k-1}$, $b_k:=c_{k-1}+\frac{(n-k+1)R_1}{n}$, $b'_k:=d_{k-1}$, $a'_{k+1}:=c_k$, and $b'_{k+1}:=c_k+\frac{(n-k)R_1}{n}$. Lastly, the solution of~\eqref{eq:PDE} with initial datum $\alpha\,\mathds{1}_{\mathcal{E}}$ extinguishes, while the solution of~\eqref{eq:PDE} with initial datum $\alpha\,\mathds{1}_{\mathcal{F}}$ does not. This proves the corollary.
\end{proof}


\subsection{Two counter-examples to the mass concentration principle: proofs of Theorems~\ref{thm:frag-fav2} and~\ref{thm:frag-fav3}}\label{ss:mass}

\begin{proof}[Proof of Theorem~\ref{thm:frag-fav2}] 
Throughout the proof, let $f:[0,1]\to\R$ be any Lipschitz-continuous function satisfying $f(0)=f(1)=0$ and one of the three following  conditions: a)~\eqref{hyp:f1a} holds; b)~there is $\theta\in(0,1)$ such that $f=0$ on $[0,\theta]\cup\{1\}$ and $f>0$ on $(\theta,1)$; c)~$f>0$ on~$(0,1)$. 

\vskip 0.4cm
\noindent{}(i) From~\cite{AroWei78,FifMcL77,Kan-64}, in cases~a) and~b), there is a unique real number $c^*$, positive, such that~\eqref{eq:PDE-R}  (with $R=+\infty$)  admits traveling front solutions of the type $\varphi(x-c^*t)$, with $\varphi:\R\to(0,1)$ such that $\varphi(-\infty)=1$ and $\varphi(+\infty)=0$. In case~c), there is $c^*>0$ such that such solutions $\varphi(x-ct)$ exist if and only if $c\ge c^*$. In all cases, there is $L^*  \ge   0$ such that, for any $L>L^*$, the solution $U$ of~\eqref{eq:PDE-R} with initial datum
$$U_0:=\mathds{1}_{(-L/2,L/2)}$$
has maximal existence time $+\infty$, ranges in $[0,1]$, converges to $0$ as $x\to\pm\infty$ for each fixed $t>0$, and converges to $1$ as $t\to+\infty$ locally uniformly in $x\in\R$. Furthermore,
$$\forall\,0\le\gamma<c^*<\gamma',\quad\max_{[-\gamma t,\gamma t]}|U(t,\cdot)-1|\mathop{\longrightarrow}_{t\to+\infty}0,\ \ \max_{(-\infty,-\gamma't]\cup[\gamma't,+\infty)}U(t,\cdot)\mathop{\longrightarrow}_{t\to+\infty}0,$$
see~\cite{AroWei78,FifMcL77}. In case~a), $L^*$ is any real number in $[L^*(1),+\infty)$ with the notation of Theorem~\ref{thm:Zla}; in case~b), $L^*$ is also any large enough positive real number; in case~c), one has $L^*>0$ in general, but one can take $L^*=0$ if $\liminf_{s\to0^+}f(s)/s^3>0$, see~\cite{AroWei78}.

Consider now any $L>L^*$. In addition to the function $U$ of the previous paragraph, let~$v$ be the solution of~\eqref{eq:PDE-R} with initial datum
$$v_0:=\mathds{1}_{(-L,L)}.$$
From the previous paragraph, $v$ has maximal existence time $+\infty$, ranges in $[0,1]$, converges to $0$ as $x\to\pm\infty$ for each $t>0$, and
$$\forall\,0\le\gamma<c^*<\gamma',\quad\max_{[-\gamma t,\gamma t]}|v(t,\cdot)-1|\mathop{\longrightarrow}_{t\to+\infty}0,\ \ \max_{(-\infty,-\gamma't]\cup[\gamma't,+\infty)}v(t,\cdot)\mathop{\longrightarrow}_{t\to+\infty}0.$$
It follows that
$$\int_{-3c^*t/4}^{3c^*t/4}U(t,x)\,dx\sim\frac{3c^*t}{2}\ \hbox{ and }\ \int_{-3c^*t/2}^{3c^*t/2}v(t,x)\,dx\sim 2c^*t\ \ \hbox{ as }t\to+\infty.$$
One can then fix in the sequel a real number $T>0$ such that
\be\label{defT}
\int_{-3c^*T/4}^{3c^*T/4}U(T,x)\,dx>\frac12\int_{-3c^*T/2}^{3c^*T/2}v(T,x)\,dx.
\ee

Choose now $d_0\ge 0$ such that
\be\label{defd0}
\frac{3c^*T}{2}\le L+d_0.
\ee
Consider then any $d\ge d_0$ and $x_0\in\R$, and let $u$ be the solution of~\eqref{eq:PDE-R} with initial datum
\be\label{defu0}
u_0:=\underbrace{\mathds{1}_{(x_0-L,x_0)}}_{=U_0(\cdot-x_0+L/2)}+\underbrace{\mathds{1}_{(x_0+d,x_0+d+L)}}_{=U_0(\cdot-x_0-d-L/2)}\!\!.
\ee
Notice that $u_0^*=\mathds{1}_{(-L,L)}=v_0$. There holds
$$0\le \max\big(U_0(\cdot-x_0+L/2),U_0(\cdot-x_0-d-L/2)\big)\le u_0\le1\ \hbox{ in $\R$},$$
whence
$$0\le \max\big(U(t,x-x_0+L/2),U(t,x-x_0-d-L/2)\big)\le u(t,x)\le1$$
for all $(t,x)\in(0,+\infty)\times\R$ from the maximum principle. Together with~\eqref{defT}, one gets that
\be\label{ineqint1}\begin{array}{rcl}
\displaystyle\int_{-3c^*T/4+x_0-L/2}^{3c^*T/4+x_0-L/2}u(T,x)\,dx & \ge & \displaystyle\int_{-3c^*T/4+x_0-L/2}^{3c^*T/4+x_0-L/2}U(T,x-x_0+L/2)\,dx\vspace{3pt}\\
& = & \displaystyle\int_{-3c^*T/4}^{3c^*T/4}U(T,x)\,dx\ > \ \frac12\int_{-3c^*T/2}^{3c^*T/2}v(T,x)\,dx\end{array}
\ee
and
\be\label{ineqint2}\begin{array}{rcl}
\displaystyle\int_{-3c^*T/4+x_0+d+L/2}^{3c^*T/4+x_0+d+L/2}u(T,x)\,dx & \ge & \displaystyle\int_{-3c^*T/4+x_0+d+L/2}^{3c^*T/4+x_0+d+L/2}U(T,x-x_0-d-L/2)\,dx\vspace{3pt}\\
& = & \displaystyle\int_{-3c^*T/4}^{3c^*T/4}U(T,x)\,dx\ > \ \frac12\int_{-3c^*T/2}^{3c^*T/2}v(T,x)\,dx.\end{array}
\ee
Since $d\ge d_0$ and $L+d_0\ge\frac{3c^*T}{2}$ by~\eqref{defd0}, one has
$$\frac{3c^*T}{4}+x_0-\frac{L}{2}\le-\frac{3c^*T}{4}+x_0+d+\frac{L}{2}.$$
Call
\be\label{defI12}\left\{\begin{array}{l}
\displaystyle I_1:=\Big(\!\!-\frac{3c^*T}{4}+x_0-\frac{L}{2},\frac{3c^*T}{4}+x_0-\frac{L}{2}\Big),\vspace{3pt}\\
\displaystyle I_2:=\Big(\!\!-\frac{3c^*T}{4}+x_0+d+\frac{L}{2},\frac{3c^*T}{4}+x_0+d+\frac{L}{2}\Big).\end{array}\right.
\ee
These two intervals $I_1$ and $I_2$ are disjoint and the measure of $I_1\cup I_2$ is equal to $3c^*T$. The inequalities~\eqref{ineqint1}-\eqref{ineqint2} then imply that
\be\label{int12}
\int_{I_1\cup I_2}u(T,x)\,dx>\int_{-3c^*T/2}^{3c^*T/2}v(T,x)\,dx.
\ee
Recalling that $u^*(T,\cdot)$ is the Schwarz decreasing rearrangement of the bounded nonnegative function $u(T,\cdot)$, one concludes that
$$\int_{-3c^*T/2}^{3c^*T/2}u^*(T,x)\,dx\ge\int_{I_1\cup I_2}u(T,x)\,dx>\int_{-3c^*T/2}^{3c^*T/2}v(T,x)\,dx.$$
This gives~\eqref{eq:nomass} with $t:=T>0$ and $r:=3c^*T/2>0$.

\vskip 0.4cm
\noindent{}(ii) Let here $R$ be any positive real number. Consider $L>L^*\ge0$, $T>0$, and $d\ge d_0\ge0$ as in part~(i) of the proof, and let $\overline{u}$ and $\overline{v}$ be the solutions of~\eqref{eq:PDE} with respective initial data
$$\overline{u}_0:=\mathds{1}_{(-d/2-L,-d/2)\cup(d/2,d/2+L)}\ \hbox{ and }\ \overline{v}_0:=\overline{u}_0^*=\mathds{1}_{(-L,L)},$$
that is, $x_0:=-d/2$ in~\eqref{defu0}. Both solutions $\overline{u}$ and~$\overline{v}$ have maximal existence time~$+\infty$, range in $[0,1]$, and one knows by~\eqref{int12} that 
\be\label{intT12}
\int_{I_1\cup I_2}\overline{u}(T,x)\,dx>\int_{-3c^*T/2}^{3c^*T/2}\overline{v}(T,x)\,dx,
\ee
where the disjoint intervals $I_1$ and $I_2$ are given by~\eqref{defI12} with $x_0=-d/2$, that is,
$$I_1=\Big(\!\!-\frac{3c^*T}{4}-\frac{d}2-\frac{L}{2},\frac{3c^*T}{4}-\frac{d}2-\frac{L}{2}\Big),\ \ I_2=\Big(\!\!-\frac{3c^*T}{4}+\frac{d}2+\frac{L}{2},\frac{3c^*T}{4}+\frac{d}2+\frac{L}{2}\Big).$$

Now, for any real number $\rho$ such that
$$\rho>\rho_0:=\max\Big(\underbrace{\frac{3c^*T}{4}+\frac{d}{2}+\frac{L}{2}}_{\ge3c^*T/2}\,,\,\frac{d}{2}+L\Big)>0,$$
the following four sets are compactly contained in $(-\rho,\rho)$:
\begin{equation}\label{Subset}
(-d/2-L,-d/2)\cup(d/2,d/2+L), \ I_1\cup I_2, \
(-L,L), \ (-3c^*T/2,3c^*T/2).
\end{equation}
Let $u_\rho$ and $v_\rho$ be the solutions of~\eqref{eq:PDE-R} and~\eqref{eq:PDE2} in $I_\rho:=(-\rho,\rho)$ with respective initial data $\mathds{1}_{(-d/2-L,-d/2)\cup(d/2,d/2+L)}$ and $\mathds{1}_{(-L,L)}$, and with Dirichlet boundary conditions $u_\rho(t,\pm\rho)=v_\rho(t,\pm\rho)=0$ for $t>0$. From the comparison principle, both functions $u_\rho$ and $v_\rho$ are nonnegative, bounded by $1$, have maximal existence time~$+\infty$, and satisfy
\be\label{uvrho}
0\le u_\rho(t,x)\le\overline{u}(t,x)\le1\ \hbox{ and }\  0\le v_\rho(t,x)\le\overline{v}(t,x)\le1
\ee
for all $t>0$ and $x\in[-\rho,\rho]$. Observe that both $\overline{u}$ and $\overline{v}$ are even in $x$ (because so are $\overline{u}_0$ and $\overline{v}_0$), and denote
$$M_\rho:=\max_{t\in[0,T]}\max\big\{\overline{u}(t,\pm\rho),\overline{v}(t,\pm\rho)\big\},$$
which is a well defined nonnegative real number (and even positive from the strong maximum principle). Since $\overline{u}_0$ and $\overline{v}_0$ are compactly supported, there holds that $M_\rho\to0$ as $\rho\to+\infty$. Let $\Lambda$ be the Lipschitz norm of the function $f$ on the interval $[0,1]$. It then follows from~\eqref{uvrho} and the maximum principle that, for any $\rho>\rho_0$,
$$0\le \overline{u}(t,x)-u_\rho(t,x)\le e^{\Lambda t}M_\rho\le e^{\Lambda T}M_\rho\ \hbox { and }\ 0\le \overline{v}(t,x)-v_\rho(t,x)\le e^{\Lambda t}M_\rho\le e^{\Lambda T}M_\rho$$
for all $t\in(0,T]$ and $x\in[-\rho,\rho]$. Together with~\eqref{intT12} and $\lim_{\rho\to+\infty}M_\rho=0$, there exists then $\rho_1>\rho_0$ such that
\be\label{uvrho2}
\int_{I_1\cup I_2}u_\rho(T,x)\,dx>\int_{-3c^*T/2}^{3c^*T/2}v_\rho(T,x)\,dx,
\ee
for all $\rho\ge\rho_1$.

Finally, define
$$M_0:=\frac{\rho_1^2}{R^2}>0.$$
For any $M\ge M_0$, denote $\rho:=R\sqrt{M}\ge\rho_1$, and
$$u(t,x):=u_\rho\Big(\frac{\rho^2}{R^2}\,t,\frac{\rho}R\,x\Big),\ \ v(t,x):=v_\rho\Big(\frac{\rho^2}{R^2}\,t,\frac{\rho}R\,x\Big),\ \hbox{ for }(t,x)\in[0,+\infty)\times[-R,R].$$
The functions $u$ and $v$ take values in $[0,1]$ in $[0,+\infty)\times[-R,R]$, and they satisfy the equations~\eqref{eq:PDE-R} and~\eqref{eq:PDE2} in $(-R,R)$ with $Mf$ instead of $f$, with Dirichlet boundary conditions~\eqref{eq:dir} at $x=\pm R$, and with respective initial data
$$u_0:=\mathds{1}_{(-dR/(2\rho)-LR/\rho,-dR/(2\rho))\ \cup\ (dR/(2\rho),dR/(2\rho)+LR/\rho)} \ \hbox{ and }\ v_0=\mathds{1}_{(-LR/\rho,LR/\rho)}=u_0^*.$$
Since $(-\rho,\rho)$ compactly contains the sets listed in~\eqref{Subset},  the set
$$E:=\big(\!\!-dR/(2\rho)-LR/\rho,-dR/(2\rho)\big)\,\cup\,\big(dR/(2\rho),dR/(2\rho)+LR/\rho\big)$$
is compactly contained in $(-R,R)$, and so are $(-LR/\rho,LR/\rho)$, $R(I_1\cup I_2)/\rho$, and $(-3c^*TR/(2\rho),3c^*TR/(2\rho))$. Furthermore, calling
$$t:=\frac{TR^2}{\rho^2}>0\ \hbox{ and }\ r:=\frac{3c^*TR}{2\rho}>0,$$
the inequality~\eqref{uvrho2} entails that 
$$\int_{R(I_1\cup I_2)/\rho}u(t,x)\,dx>\int_{-r}^rv(t,x)\,dx.$$
Since the measure of $R(I_1\cup I_2)/\rho$ is equal to $2r$, this implies that
$$\int_{-r}^{r}u^*(t,x)\,dx>\int_{-r}^{r}v(t,x)\,dx.$$
The function $Mf$ and the solutions $u$ and $v$ then provide a counter-example to the mass concentration principle for the equations~\eqref{eq:PDE-R} and~\eqref{eq:PDE2} set in~$(-R,R)$ with Dirichlet boundary conditions~\eqref{eq:dir}. The proof of Theorem~\ref{thm:frag-fav2} is thereby complete.
\end{proof}

\begin{proof}[Proof of Theorem~\ref{thm:frag-fav3}] 
Let $g:\R_+\to\R$ be a $C^\infty$ function satisfying $g(0)=0$ together with the assumptions~\eqref{hyp:f1a} and~\eqref{hyp:f2} (with $f$ replaced by $g$) with any $p>3$ and $A>0$, or with $p\in(2,3]$ and $A>A^*$, where $A^*$ is given by Theorem~\ref{thm:limitL}-(ii). By Theorem~\ref{thm:limitL}, one has $L_\infty>0$ for that function $g$. Let $R_1$ and $R_2$ be such that $0<R_1<L_\infty<R_2$, and fix a positive real number $\alpha$ and an integer $n\ge2$ as in the statement of Theorem~\ref{thm:frag-fav} (with $f$ replaced by $g$). From the choice of $\alpha$ in~\eqref{def:alpha}, one knows that $\alpha>1$. Let
$$U_0:=\alpha\,\mathds{1}_{E_n(R_1,R_2)},$$
with $E_n(R_1,R_2)$ defined as in~\eqref{eq:En}, and notice that the Schwarz non-increasing rearrangement of $U_0$ is equal to 
$$U_0^*=\alpha\,\mathds{1}_{(-R_1/2,R_1/2)}.$$
Let $U$ and $V$ be the solutions of~\eqref{eq:PDE-R} and~\eqref{eq:PDE2} (with $R=+\infty$),  with initial data $U_0$ and $U_0^*$ respectively. They both have maximal existence time $T=+\infty$, and they are nonnegative and bounded in~$(0,+\infty)\times\R$ by $\alpha$ (since $g(s)<0$ for $s>1$). It follows from Theorem~\ref{thm:frag-fav} that $V(t,\cdot)\to0$ as~$t\to+\infty$ uniformly in $\R$, while $U(t,\cdot)\to1$ as $t\to+\infty$ locally uniformly in $\R$. Finally, define
$$f(s):=\frac{g(\alpha\,s)}{\alpha},\ \hbox{ for $s\in[0,1]$},$$
and
$$u(t,x):=\frac{U(t,x)}{\alpha},\ \ v(t,x):=\frac{V(t,x)}{\alpha},\ \hbox{ for $(t,x)\in\R_+\times\R$}.$$
The function $f$ is of class $C^\infty$ on $[0,1]$ with $f(0)=0$ and $f(1)=\frac{g(\alpha)}{\alpha}<0$. The functions $u$ and $v$ range in $[0,1]$, and they satisfy the equations~\eqref{eq:PDE-R} and~\eqref{eq:PDE2} in $\R$, with respective initial data
$$u(0,\cdot)=\mathds{1}_{E_n(R_1,R_2)},\ \ v(0,\cdot)=\mathds{1}_{(-R_1/2,R_1/2)}=u(0,\cdot)^*,$$
with $E:=E_n(R_1,R_2)$ bounded. Furthermore,
$$v(t,\cdot)\to0\ \hbox{ as~$t\to+\infty$ uniformly in $\R$,}$$
while
$$u(t,\cdot)\to\frac1\alpha>0\ \hbox{ as $t\to+\infty$ locally uniformly in $\R$}.$$
In particular, for any $r>0$, there holds
$$\int_{-r}^{r}u^*(t,x)\,dx>\int_{-r}^{r}v(t,x)\,dx$$
for every $t>0$ large enough. Therefore,~\eqref{eq:mass_comp} does not hold, for every $t>0$ large enough. The proof of Theorem~\ref{thm:frag-fav3} is thereby complete.
\end{proof}


\section*{Acknowledgements}

This work was supported by the ANR projects ReaCh, ANR-23-CE40-0023 and Maths-ArboV, ANR-24-EXMA-0004.

\bibliographystyle{abbrv}
{\footnotesize{
}}

\end{document}